\documentclass[12pt]{article}
\usepackage{amsmath}
\usepackage{amssymb}
\usepackage{latexsym}
\usepackage{amsthm}
\usepackage{tikz}
\usepackage{tabularx}
\usepackage{booktabs}
\usepackage{mathrsfs}
\usepackage{enumitem}
\usetikzlibrary{calc}
\usepackage{xcolor} % 添加颜色支持
\usepackage{ytableau}
\usepackage{graphicx}
\usepackage{algorithm,algorithmic}
\usepackage[colorlinks,
linkcolor=red,
anchorcolor=blue,
citecolor=green]{hyperref}

\newtheorem{thm}{Theorem}[section]
\newtheorem{lem}[thm]{Lemma}
\newtheorem{prop}[thm]{Proposition}
\newtheorem{cor}[thm]{Corollary}

\newtheorem{rem}[thm]{Remark}

\begin{document}
	\begin{center}
		{\large \bf Hurwitz Stability of Generalized Tur\'an Expressions of Polynomial Sequences}
	\end{center}

	\begin{center}
		Huihua Gao$^1$, Xin-Bei Liu$^2$\\[6pt]
        
		$^{1,2}$Center for Combinatorics, LPMC\\
		Nankai University, Tianjin 300071, P. R. China\\[6pt]
		
		Email: $^{1}${\tt huihuagao@mail.nankai.edu.cn}, 
			   $^{2}${\tt lxb@mail.nankai.edu.cn}
		\end{center}
	\noindent\textbf{Abstract.}
In this paper we prove the weak Hurwitz stability conjecture for the strong Tur\'an expressions of Eulerian polynomials, as well as the corresponding conjecture for Narayana polynomials, which were conjectured by Liu and Yan. We generalize the notion of Tur\'an expressions of polynomial sequences, and provide a criterion for the generalized stability of generalized Tur\'an expressions. Based on our sufficient condition, we not only confirm Liu and Yan's conjectures but also obtain the generalized stability of generalized Tur\'an expressions of a class of recurrent polynomial sequences and generalized Narayana polynomials.   

\noindent\textbf{Keywords:} Hurwitz stability, generalized stability, Tur\'an expression, Eulerian polynomial, Narayana polynomial.

\noindent \emph{AMS Classification 2020:} 26C10, 05A15, 05A20

\section{Introduction}
A polynomial with real coefficients is called Hurwitz stable (resp. weakly Hurwitz stable) if all its zeros have negative (resp. nonpositive) real parts. 
In this paper, our main motivation is to prove Liu and Yan's recent stability conjectures concerning the Eulerian polynomials $\widetilde{A}_n(z)$ and the Narayana polynomials $N_n(z)$. Recall that $\widetilde{A}_n(z)$ and $N_n(z)$ can be defined respectively by 
\begin{align}\label{eq:Eulerian_recurrence}
   \widetilde{A}_n(z)
=nz\widetilde{A}_{n-1}(z)
+z(1-z)\widetilde{A}_{n-1}'(z)
\end{align}
with $\widetilde{A}_0(z)=1$ 
and 
\begin{align}\label{eq:Narayana_recurrence}
(n+2)N_{n+1}(z)
=(2n+1)(1+z)N_n(z)-(n-1)(1-z)^2N_{n-1}(z)
\end{align}
with $N_0(z)=1$ and $N_1(z)=z$.
Liu and Yan {\cite[Conjecture~4.1 and Conjecture~4.3]{Liu-Yan-2023}} conjectured that 
for any positive integers $1\le m\le n$, both
\begin{align*}
\widetilde{A}_m(z)\widetilde{A}_n(z)-\widetilde{A}_{m-1}(z)\widetilde{A}_{n+1}(z),    
\end{align*}
and 
\begin{align*}
N_m(z)N_n(z)-N_{m-1}(z)N_{n+1}(z)
\end{align*}
are weakly Hurwitz stable. 
Here we completely resolve Liu and Yan's conjectures by establishing a sufficient condition for stability of some generalization of Tur\'an expressions of polynomial sequences.  

Let us first give an overview of related backgrounds. 
The study of stability problems can be traced back to the classical
works of Routh~\cite{Routh-1877} and Hurwitz~\cite{Hurwitz-1895},
whose results led to the Routh--Hurwitz stability criterion. This
criterion is a fundamental tool for determining the asymptotic
stability of linear time-invariant systems~\cite{Barnett-Siljak-1977}.
The classical theory for real univariate polynomials was subsequently
extended to polynomials with complex
coefficients~\cite{Frank-1946}, while related notions of half-plane
stability were developed in the multivariate
setting~\cite{Borcea-Branden-2009,Choe-Oxley-Sokal-Wagner-2004}.
This broader theory has rich connections with many other areas of
mathematics such as matrix theory~\cite{Asner-1970,Gantmacher-1959,Kemperman-1982}, complex
analysis~\cite{Marden-1966}, and probability
theory~\cite{Borcea-Branden-Liggett-2009}.

Recently, Hurwitz stability has found several applications in combinatorics, including connections with semi-$\gamma$-positivity (see Ding and Zhu \cite{Ding-Zhu-2024}) and real-rootedness (see, for instance, Yang and Zhang \cite{Yang-Zhang-2015}). 
In this paper we focus on the study of Hurwitz stability of Tur\'an expressions of univariate polynomial sequences, which was initiated by Fisk~\cite[Section~21.11]{Fisk-2008}. Recall that the Tur\'an expressions $\mathcal{T}_n(f;z)$ of a polynomial sequence $(f_n(z))_{n\geq0}$ are defined by 
\begin{align*}
\mathcal{T}_n(f;z):=f_n(z)^2-f_{n-1}(z)f_{n+1}(z),\quad n\ge1,
\end{align*}
and their negative are known as the reverse Tur\'an expressions of $(f_n(z))_{n\geq0}$.
We say that $(f_n(z))_{n\geq0}$ is $z$-log-concave (resp. $z$-log-convex) if all coefficients of $\mathcal{T}_n(f;z)$ are nonnegative (resp. nonpositive) for every $n\geq1$. Liu and Wang \cite{Liu-Wang-2007} proved that both Bell polynomials and Eulerian polynomials are $z$-log-convex; see \cite[Propositions~4.4 and~4.6]{Liu-Wang-2007}. On the other hand, Fisk stated the weak Hurwitz stability of the associated Tur\'an expressions of these polynomial sequences in his unpublished book \cite{Fisk-2008}, a stronger property than $z$-log-convexity. However, there are some flaws in his proofs. 
For Bell polynomials the incorrectness of Fisk's approach was shown and then fixed by Chasse, Grabarek and Visontai \cite{Chasse-Grabarek-Visontai-2015}.
For Eulerian polynomials the incorrectness of Fisk's proof was fixed by Zhu \cite{Zhu-2018}, as explicitly pointed out by Ding and Zhu \cite{Ding-Zhu-2024}. For more results on the Hurwitz stability of Tur\'an expressions of combinatorial polynomial sequences, including the Lah polynomials, the alternating run polynomials of types $A$ and $B$, the up-down run polynomials, see \cite{Ding-Zhu-2024,Zhu-2018,Zhu-2021}.

Motivated by Fisk's work, it is natural to study the Hurwitz stability of strong Tur\'an expressions of strongly $z$-log-concave or $z$-log-convex sequences. By the strong Tur\'an expressions of a polynomial sequence $(f_n(z))_{n\geq0}$ we mean the following differences
\begin{align*}
\mathcal{T}_{n,m}(f;z)
:=f_m(z)f_n(z)-f_{m-1}(z)f_{n+1}(z),
\end{align*}
for all $1\leq m\leq n$. It is evident that $\mathcal{T}_{n,n}(f;z)=\mathcal{T}_{n}(f;z)$. 
We also call the negative of $\mathcal{T}_{n,m}(f;z)$ the reverse strong Tur\'an expressions of $(f_n(z))_{n\geq0}$.
Recall that $(f_n(z))_{n\geq0}$ is said to be strongly $z$-log-concave (resp. strongly $z$-log-convex) if all coefficients of $\mathcal{T}_{n,m}(f;z)$ are nonnegative (resp. nonpositive) for any $1\leq m\leq n$. 
Many combinatorial polynomial sequences are known to be strongly $z$-log-convex or strongly $z$-log-concave; see, for instance, \cite{Chen-Wang-Yang-2010,Chen-Wang-Yang-2011,Liu-Zhu-2015,Sagan-1992,Zhu-2013}.
It was Liu and Yan \cite{Liu-Yan-2023} who first investigated the Hurwitz stability of Tur\'an expressions of strongly $z$-log-convex sequences.  
In particular, they provided a unified approach to the Hurwitz stability of Tur\'an expressions of several  polynomial sequences, including the associated Lah polynomials, the Bell polynomials, the ordered Bell polynomials, the Dowling polynomials and the corresponding ordered Dowling polynomials.
However, their approach does not apply to the Eulerian polynomials and the Narayana polynomials. 
The main results of this paper are as follows, which were first conjectured by Liu and Yan \cite{Liu-Yan-2023}. 
Note that Liu and Yan's conjecture for the sequence of Narayana polynomials is stronger than its strong $q$-log-convexity proved by Chen, Wang, and Yang \cite[Theorem~3.2]{Chen-Wang-Yang-2010}.

\begin{thm}\label{Eulerian polynomial}
Let $\widetilde{A}_n(z)$ be the Eulerian polynomials defined by \eqref{eq:Eulerian_recurrence}. Then 
$\mathcal{T}_{n,m}(\widetilde{A};z)$ is weakly Hurwitz stable for any $1\leq m\leq n$.
\end{thm}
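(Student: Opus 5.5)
We would prove Theorem~\ref{Eulerian polynomial} with a Hermite--Biehler type argument. A real polynomial $P(z)$ with nonnegative coefficients is weakly Hurwitz stable if, after writing $P(z)=P^{e}(z^2)+zP^{o}(z^2)$, the even part $P^{e}(y)$ and odd part $P^{o}(y)$ have only real nonpositive zeros that interlace, with $P^{e}\preceq P^{o}$ in the appropriate sense. An equivalent formulation is that $P(z)$ is weakly Hurwitz stable if and only if $P(iy)\neq 0$ for $\operatorname{Re}$-sense arguments. We prefer the equivalent criterion that $\operatorname{Re}\,P(z)\overline{\,\cdot\,}$ has a fixed sign, or more precisely that $P$ has no zero in the open right half plane $\{\operatorname{Re} z>0\}$.

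So we fix $z$ with $\operatorname{Re} z>0$ and aim to show $\mathcal{T}_{n,m}(\widetilde A;z)\neq 0$. Since $\widetilde A_k$ has only real nonpositive zeros, $\widetilde A_k(z)\neq 0$ there. It then suffices to prove
\[
\frac{\widetilde A_m(z)}{\widetilde A_{m-1}(z)}\neq \frac{\widetilde A_{n+1}(z)}{\widetilde A_n(z)} .
\]
Setting $R_k(z)=\widetilde A_{k}(z)/\widetilde A_{k-1}(z)$, the recurrence \eqref{eq:Eulerian_recurrence} gives
\[
R_{k}(z)=kz+z(1-z)\frac{\widetilde A_{k-1}'(z)}{\widetilde A_{k-1}(z)}=kz+z(1-z)\sum_{j}\frac{1}{z-r_j^{(k-1)}},
\]
where the $r_j^{(k-1)}\le 0$ are the zeros of $\widetilde A_{k-1}$.

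The key step is a monotonicity statement. Multiply by a suitable rotation, for example consider $w_k=R_k(z)/z$, or study $\operatorname{Im}$ and $\operatorname{Re}$ of $R_k(z)/z$. We want to show that the differences $w_{k+1}-w_k$ all lie in a fixed open half plane $H_z$ depending on $z$, for every $k$. Then $w_{n+1}-w_m=\sum_{k=m}^{n}(w_{k+1}-w_k)\in H_z$, so it is nonzero since $m\le n$. Concretely,
\[
w_{k+1}-w_k=1+(1-z)\Big(\tfrac{\widetilde A_k'}{\widetilde A_k}-\tfrac{\widetilde A_{k-1}'}{\widetilde A_{k-1}}\Big)(z),
\]
and we would use the interlacing of zeros of $\widetilde A_{k-1}$ and $\widetilde A_k$, which is classical for Eulerian polynomials, to control the logarithmic-derivative difference. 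Interlacing lets us pair the zeros so that
\[
\frac{\widetilde A_k'}{\widetilde A_k}-\frac{\widetilde A_{k-1}'}{\widetilde A_{k-1}}=\sum_j\Big(\frac1{z-s_j}-\frac1{z-r_j}\Big)+\frac1{z-s_0},
\]
with $s_j<r_j$ (suitably ordered). Each term $\frac{r_j-s_j}{(z-s_j)(z-r_j)}$ has argument controlled by $\arg z$ because $s_j,r_j\le 0$. Alternatively we can substitute $z=1/t$ or use the symmetry of Eulerian polynomials to reduce the sector.

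The main obstacle is exactly this sign and half-plane control: the factor $(1-z)$ can rotate things badly when $|z|>1$. Our first attempt is to exploit the palindromicity $\widetilde A_k(z)=z^{k+1}\widetilde A_k(1/z)$ (up to the degree normalization) to reduce to $|z|\le1$, $\operatorname{Re} z>0$. The boundary $\operatorname{Re} z=0$ is excluded from the requirement except for the zero at $z=0$, which is allowed in weak stability. If direct control fails, the fallback is the classical Hermite--Biehler route: split $\mathcal{T}_{n,m}$ into even and odd parts and prove their interlacing inductively in $n$ via the recurrence. This would use the compatibility and interlacing machinery for the pair $(\widetilde A_{m-1}\widetilde A_{n+1},\,\widetilde A_m\widetilde A_n)$.
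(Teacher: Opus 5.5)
\paragraph{The gap.} Your reduction is sound. For $\operatorname{Re}z>0$ no $\widetilde A_k(z)$ vanishes, so it suffices to show $R_{n+1}(z)\neq R_m(z)$. But everything then rests on the claim that the differences $w_{k+1}-w_k$, $m\le k\le n$, lie in one common open half-plane, and you never prove it. You call it the main obstacle and list fallbacks (palindromic reduction, Hermite--Biehler) without carrying any of them out.

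Nonvanishing of each single difference is trivial; the content is a half-plane that works uniformly in $k$, and the mechanism you suggest cannot produce one. Interlacing puts each paired term in a sector determined by $\arg z$. After multiplying by $1-z$ and adding $1$, however, the direction is no longer controlled. Take the palindromic interlacing pair $f=(z+M)(z+1/M)$, $g=z+1$; this is $(\widetilde A_3/z,\widetilde A_2/z)$ when $M=2+\sqrt3$. At $M=100$, $z=i/2$, the analogue $1+(1-z)\bigl(f'/f-g'/g\bigr)$ of your difference is approximately $-0.55-1.22i$. So interlacing, negativity of the zeros and palindromicity do not even give positive real part. Reducing to $|z|\le 1$ does not help, since here $|z|<1$. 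A working version would need finer information about Eulerian polynomials, which the proposal does not supply. (Minor: $\frac{1}{z-s_j}-\frac{1}{z-r_j}=\frac{s_j-r_j}{(z-s_j)(z-r_j)}$, not $r_j-s_j$ in the numerator.)

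\paragraph{The missing idea.} Telescope multiplicatively instead of additively. Set $\mathcal{Q}_k=R_{k+1}/R_k$. Then $R_{n+1}/R_m=\prod_{k=m}^{n}\mathcal{Q}_k(\widetilde A;z)$, and $\mathcal{Q}_k-1=(w_{k+1}-w_k)/w_k$; in other words, you must divide your differences by $w_k$. The recurrence gives $\mathcal{Q}_k-1=zh-z(1-z)h'$ with $h=\widetilde A_{k-1}/\widetilde A_k$. Interlacing, via Lemma~\ref{lem_strict_interlace_reduction} after cancelling the common factor, gives $h=\sum_i c_i/(z-\gamma_i)$ with $c_i>0$ and $\gamma_i\le 0$. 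Hence, for $z=x+iy$,
\[
\mathcal{Q}_k(\widetilde A;z)-1=\sum_i c_i\,\frac{(1-\gamma_i)z}{(z-\gamma_i)^2},
\qquad
\operatorname{Re}\bigl(z\,\overline{(z-\gamma_i)}^{\,2}\bigr)=x(x-\gamma_i)^2+y^2(x-2\gamma_i).
\]
Each summand therefore has positive real part when $x>0$. It follows that $|\mathcal{Q}_k|^2=1+2\operatorname{Re}(\mathcal{Q}_k-1)+|\mathcal{Q}_k-1|^2>1$, and so $|R_{n+1}/R_m|>1$.

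Because moduli multiply, no common direction across $k$ is needed. This is the paper's route (Proposition~\ref{prop_main result} together with the proof of Theorem~\ref{prop_first order}). It also covers the imaginary axis, giving generalized stability, and all pairs $\lambda\unrhd\mu$.
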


\begin{thm}\label{Narayana polynomial thm}
Let $N_n(z)$ be the Narayana polynomials defined by \eqref{eq:Narayana_recurrence}. Then 
$\mathcal{T}_{n,m}(N;z)$ is weakly Hurwitz stable for any $1\leq m\leq n$.
\end{thm}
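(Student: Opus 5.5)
We need a plan for Theorem 1.2 (Narayana). The approach is a Hermite--Biehler type criterion applied to the three-term recurrence, via a generalized stability notion. The plan follows.

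We plan to deduce Theorem~\ref{Narayana polynomial thm} from the three-term recurrence \eqref{eq:Narayana_recurrence}, using the Hermite--Biehler philosophy. A real polynomial $F(z)$ is weakly Hurwitz stable precisely when $F(z)\neq 0$ for $\Re z>0$. So it suffices to show that for every $z$ with $\Re z>0$ and every $1\le m\le n$ we have $\mathcal{T}_{n,m}(N;z)\neq0$. For such $z$, write $\rho_k(z)=N_{k+1}(z)/N_k(z)$; these are well defined once we know $N_k(z)\neq 0$ on the open right half-plane, which holds because the Narayana polynomials are real-rooted with nonpositive zeros. Then
\begin{align*}
\mathcal{T}_{n,m}(N;z)=N_{m-1}(z)N_n(z)\bigl(\rho_{m-1}(z)-\rho_n(z)\bigr),
\end{align*}
so the claim reduces to showing that $\rho_{m-1}(z)\neq\rho_n(z)$ whenever $m-1<n$ and $\Re z>0$.

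The key step is to locate the ratios in nested, disjoint regions of the plane. Dividing \eqref{eq:Narayana_recurrence} by $N_n(z)$ gives the continued-fraction-type map
\begin{align*}
\rho_n=\frac{(2n+1)(1+z)}{n+2}-\frac{(n-1)(1-z)^2}{(n+2)\rho_{n-1}}.
\end{align*}
We would normalize by setting $w=\rho_n/(1+z)$, or equivalently by using the variable $u=(1-z)/(1+z)$, which maps the right half-plane to the unit disk. The aim is to prove by induction on $n$ that $\rho_n(z)/(1+z)$ lies in a region $R_n(u)$, and that these regions are strictly ordered, for instance that $\Re$ or $\arg$ of a suitable normalized quantity is strictly monotone in $n$. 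Concretely, we would try to show the normalized ratio lies in a half-plane or disk $D_n$ with $D_{n+1}\subsetneq D_n$, and that $\rho_{n+1}\notin \overline{\rho_n\text{-orbit}}$. Better still, we would show directly that
\begin{align*}
\rho_{k}(z)-\rho_{k-1}(z)=\frac{(1+z)}{k+2}\Bigl[(2k+1)-\ldots\Bigr]
\end{align*}
has a controlled argument. The successive differences would then all lie in a common open half-plane, since they are images under Möbius maps of one fixed sector. Telescoping gives $\rho_n-\rho_{m-1}=\sum_{k=m}^{n}(\rho_k-\rho_{k-1})$, and this sum is nonzero because every summand lies in the same open half-plane (a convex cone avoiding $0$).

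This is the generalized criterion suggested by the abstract. Given a recurrence $f_{n+1}=a_n(z)f_n-b_n(z)f_{n-1}$, we would seek a sector condition of the form $\arg(a_n)$, $\arg(b_n)$ and $\arg(\rho_{n-1})$ confined so that $\rho_n-\rho_{n-1}$ stays in a fixed half-plane $H_z$. For the Narayana case, $a_n\propto 1+z$ and $b_n\propto(1-z)^2$ with positive rational factors. With $z$ in the right half-plane, $1+z$ has argument in $(-\pi/2,\pi/2)$, and $(1-z)^2/(1+z)$ is controlled through $u=(1-z)/(1+z)$ with $|u|<1$. The inequality $\frac{(n-1)|u|^2}{n+2}<$ the relevant normalized bound is where the coefficient inequalities $(n-1)(n+1)<(2n+1)^2/4\cdot\ldots$ enter. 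The base case is read off from $N_0=1$, $N_1=z$, $N_2=z+z^2$.

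The main obstacle is choosing the invariant region so that the induction closes uniformly for all $z$ with $\Re z>0$, including near the imaginary axis and near $z=1$, where $b_n$ vanishes. We would first try the region $\{w:\ |w-c_n|<r_n\}$ for the normalized ratio $w=\rho_n/(1+z)$, with $c_n,r_n$ determined by the fixed point of the Möbius map at $|u|=1$. We would verify the region's invariance by a discriminant computation. The boundary case $\Re z=0$ would be handled by continuity, which yields weak rather than strict stability.
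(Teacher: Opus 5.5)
\textbf{Verdict: genuine gap.} The reduction is correct, but the step that carries the whole theorem is never proved.

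Your reduction is sound, and it is the same one the paper uses. With your $\rho_k=N_{k+1}/N_k$ one has $\mathcal{Q}_j(N;z)=\rho_j(z)/\rho_{j-1}(z)$. Hence $\mathcal{T}_{n,m}(N;z)=N_mN_n\bigl(1-\prod_{j=m}^{n}\mathcal{Q}_j(N;z)\bigr)$, which vanishes at $z$ exactly when $\rho_n(z)=\rho_{m-1}(z)$.

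Everything after that is a list of things you \emph{would try}. The statement that carries the theorem is that the differences $\rho_k(z)-\rho_{k-1}(z)$, $m\le k\le n$, lie in a common open half-plane. You never say which half-plane, and you never prove it. The reasons offered do not supply it:
(i) The maps $\rho\mapsto\frac{(2k+1)(1+z)}{k+2}-\frac{(k-1)(1-z)^2}{(k+2)\rho}$ change with $k$. Images of one sector under different M\"obius maps need not share a half-plane. Moreover, $\rho_k-\rho_{k-1}=a_k-b_k/\rho_{k-1}-\rho_{k-1}$ is not a M\"obius function of $\rho_{k-1}$ at all.
(ii) Nested regions $D_{n+1}\subsetneq D_n$ containing the normalized ratios separate nothing. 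The configuration $\rho_n\in D_n\subset D_{m-1}\ni\rho_{m-1}$ is compatible with $\rho_n=\rho_{m-1}$.
(iii) The coefficient inequality is left with an ellipsis.

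A smaller point: the continuity argument on $\operatorname{Re}z=0$ is unnecessary, since weak Hurwitz stability only needs nonvanishing for $\operatorname{Re}z>0$. Continuity could not prove nonvanishing in any case.

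The paper controls the modulus rather than the direction. It shows $|\mathcal{Q}_j(N;z)|>1$ on $\mathcal{H}$, i.e.\ $|\rho_j(z)|$ strictly increases in $j$. The ingredients are:
\begin{itemize}
\item the substitution $N_n(z)=z(1+z)^{n-1}P_n(u)$ with $u=(1-z)^2/(1+z)^2\in\mathcal{D}$;
\item the monotonicity $|P_n(u)|>|P_{n-1}(u)|$ on $\mathcal{D}$ (Lemma~\ref{lem:F_properties});
\item the identity of Lemma~\ref{lemma-Turan}, which writes the Tur\'an expression of the normalized $\widehat P_n$ as a positive combination of the terms $u^{n-j+1}\widehat P_{j-1}(u)^2$.
\end{itemize}
Together these give $\operatorname{Re}\mathcal{Q}_n>1$ for $n\ge2$ (Lemma~\ref{lem:L_estimate}). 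The case $n=1$ is the direct check $|1+1/z|>1$.

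If you want to keep the additive route, you must state and prove a concrete claim. One candidate is $\operatorname{Re}\bigl((\rho_k(z)-\rho_{k-1}(z))/(1+z)\bigr)>0$ for all $k\ge1$ and $\operatorname{Re}z>0$. For $k\ge2$ this says that $\operatorname{Re}\bigl(P_{k+1}(u)/P_k(u)\bigr)$ increases strictly in $k$ on $|u|<1$. The claim holds for $k=1,2$, where the normalized differences are $1/(1+z)$ and $z/(1+z)^2$. A proof for all $k$ is exactly the missing idea. It must also be sharp near $u=1$ (i.e.\ $z\to0$), where all these ratios tend to $1$ and the differences vanish.
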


In this paper we also generalize the above theorems in two directions. On the one hand, we will extend the notion of strong Tur\'an expressions to a broader concept. For this purpose, we recall some notation concerning integer partitions. 
Let $m$ be a positive integer. A partition of $m$ is a weakly decreasing sequence $\lambda=(\lambda_1,\lambda_2,\ldots)$ of nonnegative integers satisfying $|\lambda|=\sum_{i\ge1}\lambda_i=m$, denoted by $\lambda\vdash m$. Let $\ell(\lambda)$ denote the number of nonzero parts of $\lambda$. We may assume that 
$\lambda_i=0$ for any $i> \ell(\lambda)$. 
Given two partitions $\lambda,\mu\vdash m$, we say that $\lambda$ dominates $\mu$, written $\lambda\unrhd\mu$, if 
\begin{align*} 
\sum_{i=1}^j\lambda_i \ge \sum_{i=1}^j\mu_i \qquad\text{for all } j\ge1. 
\end{align*}
Given a sequence of real polynomials $(f_n(z))_{n\ge0}$ and a pair of partitions $(\lambda,\mu)$ of the same size with $\lambda\unrhd\mu$,  let
\begin{align*}
\mathcal{T}_{\lambda,\mu}(f;z) := \prod_{i=1}^s f_{\mu_i}(z)-\prod_{i=1}^s f_{\lambda_i}(z), 
\end{align*} 
where $s=\max\{\ell(\lambda),\ell(\mu)\}$. 
We call $\mathcal{T}_{\lambda,\mu}(f;z)$ the generalized Tur\'an expression of $(f_n(z))_{n\ge0}$ with respect to $\lambda$ and $\mu$. 
Clearly, for $1\leq m\leq n$ the strong Tur\'an expression $\mathcal{T}_{(n+1,m-1),(n,m)}(f;z)$ is the generalized Tur\'an expression $\mathcal{T}_{\lambda,\mu}(f;z)$ with $\lambda=(n+1,m-1)$ and $\mu=(n,m)$.

On the other hand, we will consider two wider classes of polynomial sequences.  
For Eulerian polynomials we follow Zhu~\cite{Zhu-2018} to consider the sequence $(E_n(z))_{n\ge 0}$ of polynomials with nonnegative coefficients satisfying the following recurrence relation: % $(E_n^{(\mathbf{a},\mathbf{b})}(z))_{n\ge 0}$:
\begin{align}\label{eq-fn-rec}
E_n(z)=[a_1n+a_2+(b_1n+b_2)z]E_{n-1}(z)+z(a_3+b_3z)E_{n-1}'(z),
\end{align}
with
\begin{align}\label{cond:parameter}
a_1, a_1+a_3, a_1+2a_3, b_1-|b_3| \ge 0; E_0(z)=1, \deg(E_n(z))=n. 
\end{align}
By~\eqref{eq:Eulerian_recurrence} the Eulerian polynomials $\widetilde{A}_n(z)$ form such a sequence  
with $a_1=a_2=b_2=0, a_3=b_1=1, b_3=-1$. 
For Narayana polynomials we consider the polynomial sequence $(N_n^{(r)}(z))_{n\geq0}$ (for some fixed $r>1/2$)
satisfying the following recurrence relation:
\begin{align}\label{eq:generalized_Narayana_re}
(n+2r)N_{n+1}^{(r)}(z)
=(2n+2r-1)(1+z)N_n^{(r)}(z)
-(n-1)(1-z)^2N_{n-1}^{(r)}(z)
\end{align}
with $N_0^{(r)}(z)=1$ and $N_1^{(r)}(z)=z$. 
Since $N_{n}(z)=N_{n}^{(1)}(z)$, we call $N_n^{(r)}(z)$ a generalized Narayana polynomial.
For more information on these polynomials and their connections to Gegenbauer polynomials, see Amdeberhan, Moll and Vignat ~\cite{Amdeberhan-2013}.

To state our general results on $(E_n(z))_{n\ge 0}$ and $(N_n^{(r)}(z))_{n\geq0}$, we also need the following definitions. Following Zhu~\cite{Zhu-2018}, call a real polynomial a generalized stable polynomial if each of its nonzero zeros has negative real part. Thus Hurwitz stability implies generalized stability, which in turn implies weak Hurwitz stability.
 Let $f(z)$ and $g(z)$ be real-rooted polynomials with zeros 
$\alpha_1\ge\cdots\ge\alpha_r,
\beta_1\ge\cdots\ge\beta_s,$
respectively. We write $f\preceq g$ if the zeros of $f$ interlace those of $g$, that is, either $s=r$ and
$\beta_1\ge\alpha_1\ge\beta_2\ge\alpha_2
\ge\cdots\ge\beta_r\ge\alpha_r,$
or $s=r+1$ and
$\beta_1\ge\alpha_1\ge\beta_2\ge\alpha_2
\ge\cdots\ge\beta_r\ge\alpha_r\ge\beta_s.$ Similarly, if all the inequalities are strict, we say that $f$ strictly interlaces $g$, and write $f\prec g$. By convention, nonzero constant polynomials strictly interlace all polynomials of degree at most one. We have the following results.

\begin{thm}\label{prop_first order}
Let $(E_n(z))_{n\ge 0}$ be a sequence of polynomials with nonnegative coefficients satisfying \eqref{eq-fn-rec} and \eqref{cond:parameter}.
If $E_n(z) \preceq  E_{n+1}(z)$ for any $n\geq 0$, then $\mathcal{T}_{\lambda,\mu}(E;z)$ is a generalized stable polynomial for any pair $(\lambda,\mu)$ of partitions with $\lambda\unrhd\mu$.
\end{thm}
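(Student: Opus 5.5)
The plan is to turn the stability question into a strict log-convexity statement for $n\mapsto\log|E_n(z)|$ at each fixed $z$ in the closed right half-plane, and then apply a majorization (Karamata) inequality. Fix $z\neq 0$ with $\operatorname{Re} z\ge 0$. Since $E_n$ has nonnegative coefficients and is real-rooted by the interlacing hypothesis, all its zeros are real and nonpositive. Hence $E_n(z)\neq 0$ for all $n$. Set
\[
\varphi(k)=\log|E_k(z)| \qquad (k\ge 0).
\]
The key claim is that $\varphi$ is \emph{strictly} convex on the integers:
\[
|E_k(z)|^2<|E_{k-1}(z)|\,|E_{k+1}(z)| \qquad \text{for all } k\ge 1 .
\]
Granting this, take partitions $\lambda\unrhd\mu$ with $\lambda\neq\mu$, padded with zeros to common length $s$. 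Since $\lambda$ majorizes $\mu$ and $\varphi$ has strictly increasing increments, the discrete Karamata inequality gives
\[
\sum_{i=1}^s\varphi(\lambda_i)>\sum_{i=1}^s\varphi(\mu_i).
\]
Thus $\bigl|\prod_i E_{\lambda_i}(z)\bigr|>\bigl|\prod_i E_{\mu_i}(z)\bigr|$, so $\mathcal{T}_{\lambda,\mu}(E;z)\neq 0$. Every nonzero zero of $\mathcal{T}_{\lambda,\mu}(E;z)$ therefore has negative real part. The trivial case $\lambda=\mu$, where the expression vanishes identically, is excluded or regarded as degenerate. To prove strict Karamata, I would write $\lambda$ as obtained from $\mu$ by a finite chain of Robin Hood transfers, each moving one unit from a smaller part to a larger one. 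Each transfer strictly increases $\sum\varphi$ because the increments $\varphi(k+1)-\varphi(k)$ are strictly increasing.

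For the key claim, introduce the ratios $R_k(z)=E_k(z)/E_{k-1}(z)$. The claim is then $|R_{k+1}(z)|>|R_k(z)|$. By \eqref{eq-fn-rec},
\[
R_{k+1}(z)=a_1(k+1)+a_2+(b_1(k+1)+b_2)z+z(a_3+b_3z)\frac{E_k'(z)}{E_k(z)},
\]
and analogously for $R_k$. Subtracting gives
\[
R_{k+1}-R_k=a_1+b_1z+z(a_3+b_3z)\Bigl(\frac{E_k'}{E_k}-\frac{E_{k-1}'}{E_{k-1}}\Bigr).
\]
The interlacing $E_{k-1}\preceq E_k$ with nonpositive zeros lets me write the partial-fraction expansion
\[
\frac{E_{k-1}(z)}{E_k(z)}=\sum_j\frac{c_j}{z-x_j},
\]
with $c_j\ge 0$ and $x_j\le 0$, plus possibly a nonnegative constant. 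Then $1/R_k$ maps the open right half-plane into the closed right half-plane and is controlled in argument. In the same way, $E_k'/E_k=\sum_j 1/(z-x_j)$ has an explicit form.

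To show $|R_{k+1}|^2-|R_k|^2=\operatorname{Re}\bigl[(R_{k+1}-R_k)\,\overline{(R_{k+1}+R_k)}\bigr]>0$, I would expand both factors using these representations. The goal is to express the result as a sum of terms each of which is nonnegative for $\operatorname{Re} z\ge 0$, using the parameter conditions \eqref{cond:parameter}: $a_1\ge0$, $a_1+a_3\ge 0$, $a_1+2a_3\ge0$, $b_1\ge|b_3|$. Strict positivity would come from the $b_1$ term, as $z\neq0$ and $b_1>0$ is forced by $\deg E_n=n$ unless degenerate. As a fallback, since all $E_k$ are nonvanishing there, the function $R_k/R_{k+1}$ is analytic on the closed right half-plane minus $0$. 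I could then apply the maximum modulus principle, reducing $|R_k/R_{k+1}|<1$ to the imaginary axis $z=iy$, $y\neq0$, and to the behaviour at $\infty$ and near $0$. On $z=iy$ the inequality becomes a real inequality in $y$ that should follow from interlacing.

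I expect this modulus inequality $|R_{k+1}(z)|>|R_k(z)|$, especially its verification on the imaginary axis with general parameters $a_i,b_i$, to be the main obstacle. The majorization step and the nonvanishing of $E_k$ are routine. The case $k=1$, with $E_0=1$, must be checked directly from $E_1(z)=a_1+a_2+(b_1+b_2)z$.
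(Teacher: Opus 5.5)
Your reduction is sound and is essentially the paper's Proposition~\ref{prop_main result} in different form. Strict discrete convexity of $k\mapsto\log|E_k(z)|$ is exactly $|\mathcal{Q}_k(E;z)|>1$, and your Robin Hood/Karamata step does the job of the factorization $\prod_i E_{\lambda_i}/\prod_i E_{\mu_i}=\prod_j\mathcal{Q}_j(E;z)^{d_j}$ with $d_j\ge0$.

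The genuine gap is the key claim itself. You leave it open, and the route you sketch would not establish it. If you expand $\operatorname{Re}\bigl[(R_{k+1}-R_k)\overline{(R_{k+1}+R_k)}\bigr]$ through the recurrence, two things go wrong:
\begin{itemize}
\item the factor $R_{k+1}+R_k$ brings in $a_2$ and $b_2$, which have no sign condition in \eqref{cond:parameter};
\item $R_{k+1}-R_k$ contains $E_k'/E_k-E_{k-1}'/E_{k-1}$, a difference of logarithmic derivatives of two different polynomials, which is not signed on its own.
\end{itemize}
So no termwise nonnegativity can come out of the parameter conditions this way.

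The missing idea is to pair $R_{k+1}-R_k$ with $\overline{R_k}$, i.e.\ to divide by $R_k$. Since $|R_{k+1}|^2-|R_k|^2=|R_{k+1}-R_k|^2+2|R_k|^2\operatorname{Re}(\mathcal{Q}_k-1)$, it suffices to control
\[
\mathcal{Q}_k(E;z)-1=\frac{R_{k+1}-R_k}{R_k}=(a_1+b_1z)\,g(z)-z(a_3+b_3z)\,g'(z),\qquad g=\frac{E_{k-1}}{E_k}.
\]
After cancelling $\gcd(E_{k-1},E_k)$, Fisk's lemma gives $g=\sum_i c_i/(z-\gamma_i)$ with $c_i>0$ and $\gamma_i\le0$. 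Hence
\[
\mathcal{Q}_k(E;z)-1=\sum_i c_i\,\frac{Bz^2+(A-b_1\gamma_i)z-a_1\gamma_i}{(z-\gamma_i)^2},\qquad A=a_1+a_3,\quad B=b_1+b_3 .
\]
An explicit expansion of $\operatorname{Re}\bigl(\overline{(z-\gamma_i)}^{\,2}(Bz^2+(A-b_1\gamma_i)z-a_1\gamma_i)\bigr)$ shows that each summand has nonnegative real part on $\mathcal{H}$. This uses exactly $a_1,\ a_1+a_3,\ a_1+2a_3\ge0$ and $b_1\pm b_3\ge0$. Your maximum-modulus fallback only moves the same unproved inequality to the imaginary axis. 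It would also have to handle the possible zero of $E_k$ at $z=0$, which lies on the boundary of the closed half-plane.

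Second, the key claim is false as stated, so the strict Karamata step needs repair.
\begin{itemize}
\item $E_n(z)=z^n$ (take $a_1=a_2=a_3=b_2=0$, $b_1=1$, $b_3=-1$) satisfies every hypothesis and has $b_1>0$, yet $|E_k|^2=|E_{k-1}|\,|E_{k+1}|$ identically.
\item $E_n(z)=\prod_{j=1}^n(z+j)$ has $b_1=0$ and is nondegenerate.
\end{itemize}
So both ``$b_1>0$ is forced unless degenerate'' and ``strictness comes from the $b_1$ term'' are wrong. What is true is a dichotomy. For each $k$, either $\mathcal{Q}_k(E;z)\equiv1$, or $|\mathcal{Q}_k(E;z)|>1$ on $\mathcal{H}$. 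The first case occurs only when $E_{k-1}/E_k=c/z$ with $A=B=0$, or when $A=b_1=0$. Your majorization argument then needs one more observation: equality of the moduli at a single point forces every $\mathcal{Q}_j$ with $d_j>0$ to be $\equiv1$. In that case $\mathcal{T}_{\lambda,\mu}(E;z)\equiv0$, which is stable by convention.
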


\begin{thm}\label{prop_second order}
Let $(N_n^{(r)}(z))_{n\geq0}$ be the polynomial sequence defined by \eqref{eq:generalized_Narayana_re} for some $r>1/2$.
If $\lambda,\mu$ are two partitions satisfying $\lambda\unrhd\mu$, then $\mathcal{T}_{\lambda,\mu}(N^{(r)};z)$ is a generalized stable polynomial.
\end{thm}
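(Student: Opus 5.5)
Throughout, write $N_n$ for $N_n^{(r)}$. The plan is to reduce Theorem~\ref{prop_second order} to a general criterion and then verify its hypotheses for $(N_n^{(r)}(z))$.

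\emph{Step 1 (reduction to elementary moves).} Any pair $\lambda\unrhd\mu$ of the same size is connected by a chain of elementary transfers: we pass from $\mu$ to $\lambda$ by repeatedly moving one unit from a smaller part $\mu_j$ to a larger part $\mu_i\ge\mu_j$. Telescoping expresses $\mathcal{T}_{\lambda,\mu}$ as a sum of terms of the form
\[
P(z)\bigl(f_{a}f_{b}-f_{a+1}f_{b-1}\bigr),\qquad a\ge b\ge1,
\]
where $P$ is a product of $f_k$'s. Each such term is a product of polynomials with nonnegative coefficients times a (reverse) strong Tur\'an expression.

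Sums of generalized stable polynomials need not be generalized stable, so we cannot simply add these terms. Instead, the criterion is phrased via the argument on the imaginary axis. We aim to show that each summand, evaluated at $z=iy$ with $y>0$, lies in a common open half-plane, or more precisely that every summand has argument in a common sector determined by the ratios $f_{n+1}(iy)/f_n(iy)$. The natural tool is interlacing. Since all $N_n$ are real-rooted with nonpositive zeros and $N_n\preceq N_{n+1}$, the ratio $R_n(z)=N_{n+1}(z)/N_n(z)$ maps the upper half-plane into itself. Moreover, for fixed $z$ in the open right half-plane, $\arg R_n(z)$ should be monotone in $n$. Then $f_af_b-f_{a+1}f_{b-1}=f_af_{b-1}(R_{b-1}-R_a)$, and monotonicity of $R_n(z)$ along $n$ gives nonvanishing at $z$ of every term in a consistent direction.

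\emph{Step 2 (the general criterion).} We prove the following: \emph{if $(f_n)$ has nonnegative coefficients, $f_n\preceq f_{n+1}$, and for every $z$ with $\operatorname{Re}z>0$ (or $\operatorname{Re} z\ge 0$, $z\ne0$) the points $R_n(z)$ move monotonically so that $R_{b-1}(z)-R_a(z)$ lies, after multiplication by $f_af_{b-1}P$, in a fixed open half-plane for all elementary terms, then $\mathcal{T}_{\lambda,\mu}(z)\neq0$.}

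To make the half-plane common, the intended approach is to write $\mathcal{T}_{\lambda,\mu}=\prod f_{\mu_i}\bigl(1-\prod f_{\lambda_i}/\prod f_{\mu_i}\bigr)$. Here $\prod f_{\lambda_i}/\prod f_{\mu_i}$ is a product of ratios $R_a/R_b$ with $a\ge b$. It then suffices to show that each $R_a(z)/R_b(z)$ lies in a sector in which their product cannot equal $1$; for instance, that $\arg(R_a/R_b)$ has a fixed sign with total argument less than $2\pi$, or that $|R_a/R_b|\ne1$ with a consistent sign. The hardest part is the bound on the total argument; we would try to control it via $\arg f_n(z)$ being increasing in $n$ with increments bounded by $\pi/2$, coming from the zeros being real and negative.

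\emph{Step 3 (verifying hypotheses for $N^{(r)}$).} We show by induction from \eqref{eq:generalized_Narayana_re} that $N_n^{(r)}$ has nonnegative coefficients, has real nonpositive zeros, and satisfies $N_n\preceq N_{n+1}$. The three-term recurrence, with the coefficient $-(n-1)(1-z)^2/(n+2r)\le0$ on the real line and $(2n+2r-1)(1+z)$ positive where needed, is handled by the standard sign-alternation argument at the zeros of $N_n$. The monotonicity of $R_n(z)$ in $n$ is then obtained from the recurrence by writing it as the continued-fraction relation
\[
R_n=\frac{(2n+2r-1)(1+z)}{n+2r}-\frac{(n-1)(1-z)^2}{(n+2r)\,R_{n-1}}
\]
and inducting on $n$, using $r>1/2$ to keep the relevant coefficients positive.

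We expect this monotonicity of $R_n(z)$ on the right half-plane, which is the step that excludes zeros off $0$, to be the main obstacle. Zeros at $z=0$ are allowed, since only generalized stability is claimed.
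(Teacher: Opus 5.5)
There is a genuine gap, and it sits in your Step~2: the criterion you actually pursue is based on arguments, and that route cannot work. As in the paper, $\prod_i f_{\lambda_i}/\prod_i f_{\mu_i}=\prod_j \mathcal{Q}_j(f;z)^{d_j}$ with $d_j=d_j(\lambda,\mu)\ge 0$. In your notation, $\mathcal{Q}_j=R_j/R_{j-1}$. The exponents $d_j$ are unbounded as $(\lambda,\mu)$ varies. For example, take $\lambda=(2^s)$ and $\mu=(1^{2s})$. The ratio is then $\mathcal{Q}_1(N^{(r)};z)^s=\bigl((1+z)/z\bigr)^s$, whose argument at $z=iy$ is $-s\arctan(1/y)$. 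This winds around the circle as $s$ grows, so no bound of the form ``fixed sign, total argument $<2\pi$'' exists. Monotonicity of $\arg R_n(z)$ in $n$ would not rescue it. This product avoids the value $1$ only because $|(1+z)/z|>1$. The workable criterion is the one you mention in passing and then drop: $|R_n(z)|$ strictly increasing in $n$, i.e.\ $|\mathcal{Q}_n(N^{(r)};z)|>1$ on $\mathcal{H}$ for every $n\ge 1$. That is precisely the hypothesis of Proposition~\ref{prop_main result}.

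The second and more serious gap is that you never establish this modulus inequality. It cannot come from nonnegative coefficients plus interlacing alone. For instance, $f_0=1$, $f_1=z+1$, $f_2=(z+\tfrac{1}{10})(z+2)$ have nonnegative coefficients and interlace, yet $f_1^2-f_0f_2=\tfrac{4}{5}-\tfrac{z}{10}$ vanishes at $z=8$. Likewise, ``inducting on the continued fraction'' does not by itself yield any inequality between $|R_n|$ and $|R_{n-1}|$.

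The paper's key device is the substitution $u=(1-z)^2/(1+z)^2$. It maps $\mathcal{H}$ into $\mathcal{D}=\{u:|u|\le 1,\ u\neq 1\}$ and converts $\mathcal{Q}_n(N^{(r)};z)$ into $\mathcal{Q}_n(P^{(r)};u)$ for $n\ge 2$. The normalized recurrence $\widehat P_{n+1}=\widehat P_n-\rho_n u\widehat P_{n-1}$ then expresses $\mathcal{T}_n(\widehat P^{(r)};u)$ as a combination of the terms $u^{n-j+1}\widehat P_{j-1}(u)^2$. The coefficients are $(\rho_j-\rho_{j-1})\prod_{k=j}^{n-1}\rho_k>0$, and the positivity of $\rho_j-\rho_{j-1}$ is exactly where $r>1/2$ enters. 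Combined with the bound $|\widehat P_{j-1}(u)/\widehat P_n(u)|<\widehat P_{j-1}(1)/\widehat P_n(1)$ on $\mathcal{D}$, this gives $\operatorname{Re}\mathcal{Q}_n>1$. The case $n=1$ is handled by the explicit formula $(1+z)/z$. An estimate of this kind is what your Step~3 needs and does not supply.
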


The remaining part of this paper is organized as follows. 
In Section~\ref{sect-lemma} we will provide a general strategy to prove the stability of generalized Tur\'an expressions of stable polynomial sequences. 
Based on this strategy we will give a proof of Theorem \ref{prop_first order} and hence that of Theorem \ref{Eulerian polynomial}
in Section~\ref{Euler-type polynomials}. Finally, we will prove Theorem \ref{prop_second order} and hence Theorem \ref{Narayana polynomial thm} in Section \ref{Narayana polynomial}.

\section{Preliminaries}\label{sect-lemma}
The aim of this section is to provide a sufficient condition for  stability of generalized Tur\'an expressions. This criterion will serve as a reference point to prove Theorems \ref{prop_first order} and \ref{prop_second order}.

We first recall some basic facts about integer partitions. By the conjugate of a partition $\lambda=(\lambda_1,\lambda_2,\ldots)$, denoted by $\lambda'$, we mean the sequence
$(\lambda_1',\lambda_2',\ldots)$ defined by 
\begin{align}\label{eq-conjugate-parts}
\lambda_j'=\#\{i\geq1:\lambda_i\geq j\},\qquad  j\geq1.   
\end{align}
It is known that $\lambda\unrhd\mu$ if and only if $\mu'\unrhd\lambda'$; see \cite[Chapter~I, Eq.~(1.11)]{Macdonald-1995}. We need the following lemma.

\begin{lem} \label{partition}
Given two partitions $\lambda$ and $\mu$ of the same size, for each $j\geq 1$ let 
\begin{align*}
d_{j-1}(\lambda,\mu)=\sum_{k\geq j}(\lambda_k'-\mu_k').
\end{align*}
If $\lambda\unrhd\mu$,  then $d_0(\lambda,\mu)=d_M(\lambda,\mu)=0$ and $d_j(\lambda,\mu)\geq 0$ for any $1\leq j\leq M-1$, where 
$M=\max\{\lambda_1,\mu_1\}$. Moreover, if $\lambda$ and $\mu$ are two distinct partitions, then $d_j(\lambda,\mu)>0$ for some $1\leq j\leq M-1$.
\end{lem}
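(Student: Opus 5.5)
The plan is to express each $d_j(\lambda,\mu)$ through partial sums of the parts of $\lambda$ and $\mu$, and then read off all three claims from the dominance inequalities $\sum_{i\le j}\lambda_i\ge\sum_{i\le j}\mu_i$. Since both tails vanish beyond $M$, we have $\lambda_k'=\mu_k'=0$ for $k>M$, so every sum defining $d_{j}$ is finite.

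\textbf{The endpoint values.} Counting the cells of the Young diagram of $\lambda$ column by column gives $\sum_{k\ge1}\lambda_k'=|\lambda|$. Hence
\[
d_0(\lambda,\mu)=|\lambda|-|\mu|=0 .
\]
Also $d_M(\lambda,\mu)=\sum_{k\ge M+1}(\lambda_k'-\mu_k')=0$, because $\lambda_k'=0$ when $k>\lambda_1$ and $\mu_k'=0$ when $k>\mu_1$.

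\textbf{Reduction to a partial-sum inequality.} Since $d_0=0$, we get
\[
d_j(\lambda,\mu)=-\sum_{k=1}^{j}(\lambda_k'-\mu_k')=\sum_{k=1}^{j}\mu_k'-\sum_{k=1}^{j}\lambda_k' .
\]
So $d_j\ge0$ for $1\le j\le M-1$ is exactly the $j$-th partial-sum inequality for $\mu'\unrhd\lambda'$. By the cited fact from Macdonald, $\lambda\unrhd\mu$ implies $\mu'\unrhd\lambda'$, so this step is immediate.

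If we want to avoid citing Macdonald, we can argue directly. The quantity $\sum_{k\le j}\lambda_k'$ counts the cells of $\lambda$ lying in the first $j$ columns, which equals $\sum_i\min(\lambda_i,j)$. Then
\[
\sum_{k>j}\lambda_k'=\sum_i(\lambda_i-j)^+=\max_{t}\sum_{i\le t}(\lambda_i-j),
\]
where the last equality holds because $\lambda$ is weakly decreasing. Dominance gives $\sum_{i\le t}(\lambda_i-j)\ge\sum_{i\le t}(\mu_i-j)$ for every $t$, and taking the maximum over $t$ yields
\[
\sum_{k>j}\lambda_k'\ge\sum_{k>j}\mu_k',
\]
which is precisely $d_j\ge0$.

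\textbf{Strictness when $\lambda\neq\mu$.} Suppose $d_j=0$ for all $1\le j\le M-1$. Together with $d_0=d_M=0$, this gives $d_j=0$ for every $0\le j\le M$. Then $\lambda_k'-\mu_k'=d_{k-1}-d_k=0$ for $1\le k\le M$, and for $k>M$ both sides vanish. Hence $\lambda'=\mu'$, and therefore $\lambda=\mu$, because conjugation is an involution. This contradicts $\lambda\ne\mu$.

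The main obstacle is the dominance-reversal step under conjugation, which we either cite from Macdonald or prove via the maximum formula above. The other steps are bookkeeping with telescoping sums.
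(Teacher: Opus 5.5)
Your proposal is correct and follows the paper's proof: the endpoint values, the telescoping identity $d_j=\sum_{k=1}^{j}(\mu_k'-\lambda_k')$, and the appeal to $\mu'\unrhd\lambda'$ via Macdonald are exactly what the paper does. Your optional direct argument via $\sum_{k>j}\lambda_k'=\max_{t\ge 0}\sum_{i\le t}(\lambda_i-j)$ and your strictness argument (all $d_j=0$ forces $\lambda'=\mu'$, hence $\lambda=\mu$) are both valid and supply details the paper leaves to a citation and to ``by definition''; note that the strictness step implicitly uses $d_j\ge 0$, i.e.\ the hypothesis $\lambda\unrhd\mu$.
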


\begin{proof}
It is obvious that $d_0(\lambda,\mu)=d_M(\lambda,\mu)=0$
since $\lambda,\mu$ have the same size, and the definition of $M$. 
For each $1\leq j\leq M$, it is routine to verify that
\begin{align*}
d_j(\lambda,\mu)
=\sum_{k\geq j+1}(\lambda_k'-\mu_k')=-\sum_{k=1}^{j}(\lambda_k'-\mu_k')
=\sum_{k=1}^{j}(\mu_k'-\lambda_k')
\geq 0,
\end{align*}
where the last inequality follows from the fact $\mu'\unrhd\lambda'$.
By definition, if $\lambda\unrhd \mu$ but $\lambda\neq \mu$
then there exists some $1\leq j\leq M$ such that $d_j(\lambda,\mu)>0$.
\end{proof}
We proceed to state the sufficient condition for 
the stability of $\mathcal{T}_{\lambda,\mu}(f;z)$ of a real polynomial sequence $(f_n(z))_{n\geq0}$.
Given $\Omega\subseteq\mathbb{C}$, recall that a nonzero polynomial
$f\in\mathbb{R}[z]$ is said to be $\Omega$-stable if
$f(z)\neq0$ for every $z\in\Omega$. 
Throughout this paper, we assume that $\Omega$ contains infinitely many points in the complex plane $\mathbb{C}$. 
For more information on $\Omega$-stability, see Borcea and Br\"and\'en \cite{Borcea-Branden-2009}.
Let $\operatorname{Re}(z)$ and $\operatorname{Im}(z)$ denote the real and imaginary parts of a complex number $z$, respectively. 
Let $\mathcal{H}=\{z\in\mathbb{C}:\operatorname{Re}z\ge 0,z\neq 0\}$ throughout this paper.
Note that, when $\Omega=\mathcal{H}$, $\Omega$-stability becomes generalized stability due to Zhu \cite{Zhu-2018}. 
By convention, the zero polynomial is regarded as  $\Omega$-stable for any $\Omega\subseteq\mathbb{C}$. 
For a given sequence $(f_n(z))_{n\geq0}$ of real polynomials and each $m\geq 1$, let
\begin{align}\label{eq-polynomial-Q}
\mathcal{Q}_m(f;z)=\frac{f_{m+1}(z)f_{m-1}(z)}{f_m(z)^2}.
\end{align}
We find that the stability of $\mathcal{T}_{\lambda,\mu}(f;z)$
is closely related to the properties of the associated rational functions $\mathcal{Q}_m(f;z)$, as indicated below. 

\begin{prop}\label{prop_main result}
 Let $\lambda$ and $\mu$ be partitions. Given $\Omega\subseteq\mathbb{C}$ and a sequence 
$(f_n(z))_{n\geq0}$ of nonzero $\Omega$-stable polynomials,
let $\mathcal{Q}_n(f;z)$ be defined as in \eqref{eq-polynomial-Q}. 
If for each $n\geq1$ either $\mathcal{Q}_n(f;z)\equiv 1$ or
$|\mathcal{Q}_n(f;z)|>1$  on $\Omega$, then $\mathcal{T}_{\lambda,\mu}(f;z)$ is $\Omega$-stable
provided that $\lambda\unrhd\mu$.
\end{prop}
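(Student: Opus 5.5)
We express the ratio of the two products in $\mathcal{T}_{\lambda,\mu}(f;z)$ through the rational functions $\mathcal{Q}_j(f;z)$, with exponents given by Lemma~\ref{partition}. Throughout we set $f_k\equiv f_0$ for indices beyond the length, since both products have $s$ factors and parts equal to $0$ contribute $f_0$.

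First we rewrite each product via the conjugate partition. For a partition $\nu$ with $s$ parts (zeros allowed), the number of factors $f_{\nu_i}$ with $\nu_i= j$ equals $\nu'_j-\nu'_{j+1}$ for $j\ge1$, and $s-\nu'_1$ for $j=0$. Introduce the ratios $g_j=f_j/f_{j-1}$ for $j\ge1$, which are well defined on $\Omega$ because each $f_n$ is nonzero there. Then $f_{\nu_i}=f_0\prod_{j=1}^{\nu_i}g_j$, so
\begin{align*}
\prod_{i=1}^s f_{\nu_i}=f_0^{\,s}\prod_{j\ge1} g_j^{\,\nu'_j}.
\end{align*}
Taking the quotient for $\nu=\lambda$ and $\nu=\mu$ gives $\prod_i f_{\lambda_i}/\prod_i f_{\mu_i}=\prod_{j\ge1}g_j^{\lambda'_j-\mu'_j}$.

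Next we sum by parts. Since $\mathcal{Q}_j=g_{j+1}/g_j$, we have $g_j=g_1\prod_{k=1}^{j-1}\mathcal{Q}_k$. Substituting, the power of $g_1$ is $\sum_j(\lambda'_j-\mu'_j)=d_0(\lambda,\mu)=0$. The power of $\mathcal{Q}_k$ is $\sum_{j\ge k+1}(\lambda'_j-\mu'_j)=d_k(\lambda,\mu)$. Hence on $\Omega$
\begin{align*}
\frac{\prod_{i=1}^s f_{\lambda_i}(z)}{\prod_{i=1}^s f_{\mu_i}(z)}=\prod_{k=1}^{M-1}\mathcal{Q}_k(f;z)^{d_k(\lambda,\mu)},
\end{align*}
where the range is $1\le k\le M-1$ because $d_k=0$ for $k\ge M$.

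Now we conclude. Suppose $z\in\Omega$ is a zero of $\mathcal{T}_{\lambda,\mu}(f;z)$. Since $\prod f_{\mu_i}(z)\ne0$, the right-hand side above equals $1$ at $z$. By Lemma~\ref{partition}, every exponent $d_k$ is nonnegative. Each factor either satisfies $\mathcal{Q}_k\equiv1$ or has $|\mathcal{Q}_k(z)|>1$. So the product has modulus $1$ only if $d_k=0$ for every $k$ with $\mathcal{Q}_k\not\equiv1$. In that case the product is identically $1$ as a rational function, and $\mathcal{T}_{\lambda,\mu}(f;z)$ is the zero polynomial, which is $\Omega$-stable by convention. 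Otherwise $\mathcal{T}_{\lambda,\mu}$ has no zero in $\Omega$. The case $\lambda=\mu$ is trivial and is covered by the same argument.

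The main obstacle is the index bookkeeping in the telescoping step, which must correctly include the zero parts and verify that the exponent of $g_1$ cancels. Everything after that step follows directly from the sign information in Lemma~\ref{partition}.
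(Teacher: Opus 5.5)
Your proposal is correct and is essentially the paper's proof: the same ratios $f_j/f_{j-1}$, the same conjugate-partition rewriting of both products, and the same Abel summation giving $\prod_k \mathcal{Q}_k(f;z)^{d_k(\lambda,\mu)}$. You collect exponents via $g_j=g_1\prod_{k<j}\mathcal{Q}_k$ where the paper writes $R_j^{d_{j-1}-d_j}$, and then you use the same modulus dichotomy. The only blemish is the phrase ``set $f_k\equiv f_0$ for indices beyond the length''; it should say that the factors $f_{\nu_i}$ with $i>\ell(\nu)$ equal $f_0$ because $\nu_i=0$, since no $f_k$ is being redefined.
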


\begin{proof}
The case $\lambda=\mu$ is immediate, since
$\mathcal{T}_{\lambda,\mu}(f;z)\equiv 0$. Thus, we may assume that $\lambda$ and $\mu$ are distinct. 
Let $s=\max\{\ell(\lambda),\ell(\mu)\}$.
By setting $R_j(z)=f_j(z)/f_{j-1}(z)$ for each $j\geq1$, we obtain 
\begin{align*}
f_k(z)=f_0(z)\prod_{j=1}^{k}R_j(z),\qquad k\geq0.
\end{align*}
Consequently,
\begin{align*}
\prod_{i=1}^s f_{\lambda_i}(z)&=\prod_{i=1}^s \left(f_0(z)\prod_{j=1}^{\lambda_i}R_j(z)\right)=f_0(z)^s \prod_{i=1}^s\prod_{j=1}^{\lambda_i}R_j(z)\\[5pt]
&=f_0(z)^s\prod_{j=1}^{\lambda_1}R_j(z)^{\#\{i:\lambda_i\geq j\}}
=f_0(z)^s\prod_{j\geq 1}R_j(z)^{\lambda_j'},
\end{align*}
where the third equality is obtained by interchanging the order of multiplication and the last equality follows from 
\eqref{eq-conjugate-parts}. 
In the same manner, we obtain
\begin{align*}
\prod_{i=1}^s f_{\mu_i}(z)
=f_0(z)^s\prod_{j\geq 1}R_j(z)^{\mu_j'}.
\end{align*}
Let $d_i=d_i(\lambda,\mu)$ be as defined in Lemma \ref{partition}, which shows that $d_0=d_M=0$ and $d_j>0$ for some $1\leq j\leq M-1$ since $\lambda\unrhd \mu$. Thus,
\begin{align*}
\frac{\prod_{i=1}^s f_{\lambda_i}(z)}
     {\prod_{i=1}^s f_{\mu_i}(z)}
 =\prod_{j=1}^{M}R_j(z)^{d_{j-1}-d_j}
 =\prod_{j=1}^{M-1}
 \left(\frac{R_{j+1}(z)}{R_j(z)}\right)^{d_j}
 =\prod_{j=1}^{M-1}\mathcal{Q}_j(f;z)^{d_j}.
\end{align*}
By the condition on $\mathcal{Q}_j(f;z)$, we see that 
\begin{align*}
\left|
\frac{\prod_{i=1}^s f_{\lambda_i}(z_0)}
{\prod_{i=1}^s f_{\mu_i}(z_0)}
\right|
=\prod_{j\ge1}|\mathcal{Q}_j(f;z_0)|^{d_j}
\ge 1
\end{align*}
for any $z_0\in \Omega$. 
If the above inequality is strict, then $\mathcal{T}_{\lambda,\mu}(f;z_0)\neq 0$. 
Otherwise, we have $|\mathcal{Q}_j(f;z_0)|=1$ for each $j\geq 1$ with $d_j>0$, which forces $\mathcal{Q}_j(f;z)\equiv 1$ on $\Omega$ by the hypothesis and hence $\mathcal{T}_{\lambda,\mu}(f;z)\equiv 0$.
This completes the proof.
\end{proof}

\begin{rem}
The conclusion of Proposition~\ref{prop_main result} remains valid if
we require that $\mathcal{Q}_n(f;z)$ satisfy the condition that 
for each $n\geq1$ either $\mathcal{Q}_n(f;z)\equiv1$ or $|\mathcal{Q}_n(f;z)|<1$ on $\Omega$.
\end{rem}
\section{Eulerian polynomial}\label{Euler-type polynomials}

In this section we aim to prove Theorem~\ref{prop_first order}. 
As a result, we extend Zhu's result \cite{Zhu-2018} from Tur\'an expressions to generalized Tur\'an expressions for a class of recurrent polynomial sequences, including that of colored Eulerian polynomials and that of Eulerian polynomials $\widetilde{A}_n(z)$ as special cases. In particular, we obtain Theorem \ref{Eulerian polynomial} and thus confirm Liu and Yan's Conjecture  \cite[Conjecture~4.1]{Liu-Yan-2023} on the weak Hurwitz stability of reverse strong Tur\'an expressions for Eulerian polynomials.

In order to use Proposition~\ref{prop_main result} to prove Theorem~\ref{prop_first order}, we need to take 
$\Omega=\mathcal{H}$. 
The remainder of the proof is analogous to that of \cite[Theorem 1.6]{Zhu-2018}.
For the sake of completeness, we include a result due to Fisk \cite{Fisk-2008} below, which allows us to give useful expressions of the quotients ${E_{n-1}(z)}/{E_{n}(z)}$ for the involved sequence $(E_n(z))_{n\ge 0}$ in 
Theorem~\ref{prop_first order}.

\begin{lem}[{\cite[Lemma 1.20]{Fisk-2008}}]\label{lem_strict_interlace_reduction}
Let $f(z)$ and $g(z)$ be two nonzero real-rooted polynomials with positive leading coefficients. Assume that $\deg(f(z))=\deg(g(z))+1$
and $g(z)\prec f(z)$. 
 If the distinct zeros of $f(z)$ are $\gamma_1,\ldots,\gamma_t$, then
\begin{align*}
{g}(z)=\sum_{i=1}^{t}c_i\frac{{f}(z)}{z-\gamma_i},
\end{align*}
where
$c_i>0$
for all $i$.
\end{lem}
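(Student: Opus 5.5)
The idea is to reduce to the case where $f$ has simple zeros by dividing out the common factor of $f$ and $g$, and then to read off the $c_i$ as residues of the rational function $g/f$. Let $n=\deg f$, and let $\gamma_1>\gamma_2>\cdots>\gamma_t$ be the distinct zeros of $f$ with multiplicities $m_1,\ldots,m_t$, so $\sum_i m_i=n$.

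\textbf{Step 1: the zeros of $g$.} Since the zeros of $g$ interlace those of $f$ and $\deg g=n-1$, a zero of $f$ of multiplicity $m_i$ occupies $m_i$ consecutive positions in the interlacing chain. The zeros of $g$ squeezed between these positions must equal $\gamma_i$, so $\gamma_i$ is a zero of $g$ of multiplicity at least $m_i-1$. This accounts for $n-t$ zeros of $g$. The remaining $t-1$ zeros lie one in each closed gap $[\gamma_{i+1},\gamma_i]$. I read the hypothesis $g\prec f$ (strict interlacing in the paper's convention) as saying that these gap zeros lie in the open gaps $(\gamma_{i+1},\gamma_i)$. Then $\gamma_i$ has multiplicity exactly $m_i-1$ in $g$. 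This interpretation is exactly what is needed: if a gap zero sat at some $\gamma_i$, the corresponding coefficient $c_i$ would vanish.

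\textbf{Step 2: reduction.} Put $h(z)=\prod_i (z-\gamma_i)^{m_i-1}$ and write $f=h f_1$, $g=h g_1$. Here $f_1=c\prod_i(z-\gamma_i)$ with $c>0$ has simple zeros, $\deg g_1=t-1$, and $g_1$ has exactly one zero in each open interval $(\gamma_{i+1},\gamma_i)$ and no others. In particular $g_1(\gamma_i)\neq 0$, and $g_1$ has positive leading coefficient.

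\textbf{Step 3: partial fractions.} Since $g/f=g_1/f_1$ is a proper rational function with only simple poles at $\gamma_1,\ldots,\gamma_t$,
\[
\frac{g(z)}{f(z)}=\sum_{i=1}^t\frac{c_i}{z-\gamma_i},\qquad c_i=\frac{g_1(\gamma_i)}{f_1'(\gamma_i)}.
\]
Multiplying through by $f$ gives the desired expansion $g(z)=\sum_{i=1}^t c_i f(z)/(z-\gamma_i)$.

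\textbf{Step 4: the sign of $c_i$.} Exactly $i-1$ zeros of $g_1$ exceed $\gamma_i$, and $g_1$ has positive leading coefficient, so $\operatorname{sgn} g_1(\gamma_i)=(-1)^{i-1}$. Similarly $f_1'(\gamma_i)=c\prod_{j\neq i}(\gamma_i-\gamma_j)$ has exactly $i-1$ negative factors, so $\operatorname{sgn} f_1'(\gamma_i)=(-1)^{i-1}$. Hence $c_i>0$ for every $i$.

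The only delicate point is Step 1: the multiplicity bookkeeping in the interlacing chain when $f$ has repeated zeros. Specifically, one must check that the count forces multiplicity exactly $m_i-1$ at each $\gamma_i$, with the extra zeros strictly inside the gaps. Without this, some $c_i$ could be $0$ rather than positive.
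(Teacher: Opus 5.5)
Your proof is correct. The paper does not prove this lemma itself; it quotes it from Fisk. Your argument is the standard one: expand $g/f$ in partial fractions and identify $c_i=g_1(\gamma_i)/f_1'(\gamma_i)$ as a residue. Both numerator and denominator then have sign $(-1)^{i-1}$, because exactly $i-1$ zeros of $g_1$, and exactly $i-1$ of the other zeros of $f_1$, lie to the right of $\gamma_i$.

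With the paper's definition of $\prec$, your Steps 1--2 are vacuous. The hypothesis $g\prec f$ requires every inequality in $\beta_1>\alpha_1>\beta_2>\cdots>\alpha_{n-1}>\beta_n$ to be strict. So $f$ has $n=\deg f$ simple zeros, $t=n$, $f$ and $g$ share no zero, and $h\equiv 1$.

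Your reading of $\prec$ is weaker than the paper's, since it allows a repeated zero of $f$ shared by $g$ to multiplicity $m_i-1$. So you actually prove a slightly stronger statement. The factorization $f=hf_1$, $g=hg_1$ plays the same role as dividing by $\gcd(E_n,E_{n-1})$, which the paper does in the proof of Theorem~\ref{prop_first order} before invoking the lemma.
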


We are now ready to prove Theorem~\ref{prop_first order}. 

\begin{proof}[Proof of Theorem~\ref{prop_first order}]
By hypothesis, for any $n \geq 0$ the polynomial $E_n(z)$ is $\mathcal{H}$-stable. Let $\mathcal{Q}_n(E;z)$ be the rational function with respect to
the sequence $(E_n(z))_{n\ge 0}$ as defined by \eqref{eq-polynomial-Q}. 
By Proposition~\ref{prop_main result} it suffices to show that for each $n\geq 1$ either 
$\mathcal{Q}_n(E;z)\equiv 1$ or $|\mathcal{Q}_n(E;z)|>1$ over $\mathcal{H}$.
To this end, let 
\begin{align*}
\widetilde{E}_n(z)=\frac{E_n(z)}{\gcd(E_n(z),E_{n-1}(z))},\qquad
\widehat{E}_n(z)=\frac{E_{n-1}(z)}{\gcd(E_n(z),E_{n-1}(z))}.
\end{align*}
Then from ${E}_{n-1}(z) \preceq {E}_{n}(z)$ it follows that 
$\widehat{E}_n(z)$ strictly interlaces $ \widetilde{E}_n(z)$.
Thus all zeros of $\widetilde{E}_n(z)$ are distinct and nonpositive, say $\gamma_1,\ldots,\gamma_t$.

By Lemma~\ref{lem_strict_interlace_reduction} we have
\begin{align}\label{eq-quotient}
\frac{E_{n-1}(z)}{E_n(z)}
=\frac{\widehat{E}_n(z)}{\widetilde{E}_n(z)}
=\sum_{i=1}^t\frac{c_i}{z-\gamma_i},
\end{align}
where $c_i>0$ for $1\le i\le t$.
Based on the recurrence relation \eqref{eq-fn-rec} and along the lines of the proof of Zhu~\cite[Theorem~1.6]{Zhu-2018}, one can check that
\begin{align*}
\mathcal{Q}_n(E;z)-1 
&= (a_1+ b_1z) \frac{E_{n-1}(z)}{E_n(z)} - z(a_3+ b_3z) \left( \frac{E_{n-1}(z)}{E_n(z)} \right)'.
\end{align*}
By further using \eqref{eq-quotient}, we find that 
\begin{align*}
\mathcal{Q}_n(E;z)-1 &= (a_1 + b_1z) \sum_{i=1}^{t} \frac{c_i}{z - \gamma_i} + z(a_3 + b_3 z) \sum_{i=1}^{t} \frac{c_i}{(z - \gamma_i)^2}.
\end{align*}
By setting $A=a_1+a_3$ and $B=b_1+b_3$, which are nonnegative by the hypothesis, we can rewrite the above equality as
\begin{align}\label{eq-QnEz-1}
\mathcal{Q}_n(E;z)-1 &=\sum_{i=1}^t
c_i\frac{Bz^2+(A-b_1\gamma_i)z-a_1\gamma_i}{(z-\gamma_i)^2}.
\end{align}
For every $\gamma_i\le 0$ ($\gamma_i\notin \mathcal{H}$) and $z=x+Iy\in\mathcal{H}$ (hence $x\geq 0$) it is routine to verify that 
\begin{align}
&\mathrm{Re}\left(\overline{(z-\gamma_i)}^2\left(
Bz^2+(A-b_1\gamma_i)z-a_1\gamma_i\right)\right)=\nonumber\\
&B(x^2-y^2)^2
+Bx^2(\gamma_i^2-2x\gamma_i)
+(xA-xb_1\gamma_i-a_1\gamma_i)(x-\gamma_i)^2\nonumber\\
&+y^2\left[
4Bx^2+xA-(2B+b_1)x\gamma_i
-(a_1+2a_3)\gamma_i+(b_1-b_3)\gamma_i^2
\right]\ge0,\label{eq-Realpart}
\end{align}
where we let $\overline{z}$ denote the conjugate of $z$ as usual. 
Since $\widetilde{E}_n(z)$ is $\mathcal{H}$-stable, we have $z-\gamma_i\neq 0$ and hence 
\begin{align*}
\mathrm{Re} (\mathcal{Q}_n(E;z)-1)= \sum_{i=1}^t \mathrm{Re} \, \left(c_i\frac{Bz^2+(A-b_1\gamma_i)z-a_1\gamma_i}{(z-\gamma_i)^2}\right)
\ge 0
\end{align*}
for any $z\in\mathcal{H}$.

We proceed to show that 
either $\mathcal{Q}_n(E;z)\equiv 1$ or $|\mathcal{Q}_n(E;z)|>1$ over $\mathcal{H}$.
There are two cases to consider according to whether $\widetilde{E}_n(z)$ has a negative zero or not. 
If it has no negative zeros, then $t=1$ and $\gamma_1=0$.
Hence
\begin{align*}
\mathcal{Q}_n(E;z)-1=c_1\left(B+\frac{A}{z}\right).
\end{align*}
If $A=B=0$, then $\mathcal{Q}_n(E;z)\equiv1$. Otherwise, it is clear that 
$|\mathcal{Q}_n(E;z)-1|>0$ and hence
\begin{align*}
|\mathcal{Q}_n(E;z)|^2
&=1+2\operatorname{Re}\bigl(\mathcal{Q}_n(E;z)-1\bigr)
  +|\mathcal{Q}_n(E;z)-1|^2>1,
\end{align*}
implying $|\mathcal{Q}_n(E;z)|>1$ over $\mathcal{H}$.

We next consider the case that $\widetilde{E}_n(z)$ has at least one negative zero, say $\gamma_i<0$ for some $i$.
We may assume that at
least one of $A$ and $b_1$ is positive. Otherwise, we have $a_1=a_3=0$
 since $0=2A=a_1+(a_1+2a_3)=0, a_1\geq 0$ and $a_1+2a_3\geq 0$, and we also have 
 $B=0$ since $|b_3|\leq b_1=0$ and $B=b_1+b_3$; this forces that $\mathcal{Q}_n(E;z)\equiv 1$ in view of 
\eqref{eq-QnEz-1}. If $A>0$ or $b_1>0$, then at least one of
$a_1$, $a_1+2a_3$, $B$, and $b_1-b_3$ is positive. % in view of $2A=a_1+(a_1+2a_3)$ and $2b_1=B+(b_1-b_3)$.
In this case we must have $|\mathcal{Q}_n(E;z)|>1$ over $\mathcal{H}$. To prove this, it suffices to show that 
\begin{align}\label{eq-RealpartInequality}
\operatorname{Re}\left(
\frac{Bz^2+(A-b_1\gamma_i)z-a_1\gamma_i}
{(z-\gamma_i)^2}
\right)>0
\end{align}
for each $z=x+yI\in \mathcal{H}$. 
Our proof of the above inequality will be divided into two cases: $x>0$ or $x=0$.
If $x>0$, then it follows from \eqref{eq-Realpart} that
\begin{align*}
\mathrm{Re}&\left(\overline{(z-\gamma_i)}^2\left(
Bz^2+(A-b_1\gamma_i)z-a_1\gamma_i\right)\right)\\[5pt]
& \geq 
(xA-xb_1\gamma_i-a_1\gamma_i)(x-\gamma_i)^2>0,
\end{align*}
and hence \eqref{eq-RealpartInequality} holds, where the second inequality follows from $A>0$ or $b_1>0$.
We proceed to consider the case of $x=0$, for which we must have $y\neq 0$ since $z=x+yI\in \mathcal{H}$.
Again by \eqref{eq-Realpart} we get that 
\begin{align*}
\mathrm{Re}&\left(\overline{(z-\gamma_i)}^2\left(
Bz^2+(A-b_1\gamma_i)z-a_1\gamma_i\right)\right)\\[5pt]
& = 
By^4-a_1\gamma_i^3
+y^2\left[-(a_1+2a_3)\gamma_i
+(b_1-b_3)\gamma_i^2\right]>0,
\end{align*}
and hence \eqref{eq-RealpartInequality} also holds, where the last inequality follows from the fact that
at least one of $a_1$, $a_1+2a_3$, $B$, and $b_1-b_3$ is positive.
This completes the proof. 
\end{proof}

As an immediate consequence of Theorem \ref{prop_first order}, we have the following result, which implies Theorem \ref{Eulerian polynomial}.

\begin{cor}
\label{cor_tilde_A_Eulerian}
For any pair of partitions $\lambda$ and $\mu$ 
with $\lambda\unrhd\mu$, the Tur\'an expression
$\mathcal{T}_{\lambda,\mu}(\widetilde{A};z)$ is a generalized stable polynomial.
\end{cor}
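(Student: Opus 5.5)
The plan is to apply Theorem~\ref{prop_first order} directly to the sequence $(\widetilde{A}_n(z))_{n\ge0}$, so that only its hypotheses need to be checked.

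\textbf{Step 1: the recurrence and parameter conditions.} By \eqref{eq:Eulerian_recurrence}, $\widetilde{A}_n(z)$ satisfies \eqref{eq-fn-rec} with $a_1=a_2=b_2=0$, $a_3=b_1=1$ and $b_3=-1$. Then
\begin{align*}
a_1=0,\qquad a_1+a_3=1,\qquad a_1+2a_3=2,\qquad b_1-|b_3|=0,
\end{align*}
all of which are nonnegative, and $\widetilde{A}_0(z)=1$. So \eqref{cond:parameter} holds apart from the degree condition.

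\textbf{Step 2: nonnegative coefficients and degree.} The recurrence can be rewritten as
\begin{align*}
\widetilde{A}_n(z)=nz\widetilde{A}_{n-1}(z)+z\widetilde{A}_{n-1}'(z)-z^2\widetilde{A}_{n-1}'(z).
\end{align*}
The negative term $-z^2\widetilde{A}_{n-1}'$ needs care. I would use the standard combinatorial interpretation: $\widetilde{A}_n(z)=\sum_{\pi\in S_n} z^{\operatorname{exc}(\pi)+1}$ (or the descent version), whose coefficients are the Eulerian numbers. One checks this against the recurrence coefficientwise via the Eulerian number recurrence
\begin{align*}
A(n,k)=kA(n-1,k)+(n-k+1)A(n-1,k-1).
\end{align*}
Nonnegativity follows from this interpretation. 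The top coefficient of $z^n$ is $A(n,n)=1$, coming from the identity permutation counted with exponent $n$. Alternatively, the leading coefficient satisfies $c_n=n c_{n-1}+(n-1)c_{n-1}\cdot 0$ after the cancellation, which works out to $c_n=c_{n-1}$, so $\deg \widetilde{A}_n=n$.

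\textbf{Step 3: interlacing.} This is the main obstacle, though it is classical. I need real-rootedness of $\widetilde{A}_n$ together with $\widetilde{A}_n\preceq\widetilde{A}_{n+1}$. I would cite the well-known results of Frobenius and Harper, or argue by induction in the usual way.

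Write $\widetilde{A}_n(z)=z(1-z)^{n+1}\frac{d}{dz}\bigl[\widetilde{A}_{n-1}(z)/(1-z)^{n}\bigr]$, which follows from \eqref{eq:Eulerian_recurrence}. Suppose $\widetilde{A}_{n-1}$ has simple zeros in $(-\infty,0]$. Rolle's theorem applied to $\widetilde{A}_{n-1}(z)/(1-z)^n$ on the negative axis gives one zero strictly between each pair of consecutive zeros of $\widetilde{A}_{n-1}$. It gives one more zero to the left of the smallest zero, because the function tends to $0$ as $z\to-\infty$, since the degree $n-1$ is less than $n$. Finally, the factor $z$ contributes the zero at $0$. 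This accounts for all $n$ zeros of $\widetilde{A}_n$, which are real, nonpositive and interlace those of $\widetilde{A}_{n-1}$.

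For small cases, note that $\widetilde{A}_0=1$ and $\widetilde{A}_1=z$, and the convention on constants handles the base. The common zero at $0$ is allowed by $\preceq$, which is non-strict.

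\textbf{Step 4: conclusion.} With all hypotheses verified, Theorem~\ref{prop_first order} yields that $\mathcal{T}_{\lambda,\mu}(\widetilde{A};z)$ is generalized stable for every pair $\lambda\unrhd\mu$. Specializing to $\lambda=(n+1,m-1)$ and $\mu=(n,m)$ gives that $\mathcal{T}_{(n+1,m-1),(n,m)}(\widetilde{A};z)=\mathcal{T}_{n,m}(\widetilde{A};z)$ is generalized stable. Generalized stability implies weak Hurwitz stability, which gives Theorem~\ref{Eulerian polynomial}.
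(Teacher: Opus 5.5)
Your proposal is correct and follows the paper's proof: you specialize Theorem~\ref{prop_first order} to $a_1=a_2=b_2=0$, $a_3=b_1=1$, $b_3=-1$, and the only difference is that you derive $\widetilde{A}_{n-1}\preceq\widetilde{A}_n$ yourself by the Rolle argument on $\widetilde{A}_{n-1}(z)/(1-z)^n$, whereas the paper cites Zhu's Proposition~1.7. Two harmless slips in Step~2 need fixing: the $z^n$ term comes from the decreasing permutation $n(n-1)\cdots 1$ in the descent version, not from the identity (which contributes $z^1$), and the leading-coefficient computation should read $c_n=nc_{n-1}-(n-1)c_{n-1}=c_{n-1}$.
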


\begin{proof}
The polynomial $\widetilde{A}_n(z)$ has nonnegative coefficients and $\deg\widetilde{A}_n(z)=n$ for any $n\geq0$.  We know that $\widetilde{A}_{n-1}(z)\preceq \widetilde{A}_{n}(z)$ for any $n\geq1$ (see \cite[Proposition~1.7]{Zhu-2018} for instance). According to \eqref{eq:Eulerian_recurrence}, the desired result follows from Theorem~\ref{prop_first order} by setting $a_1=a_2=b_2=0$, $b_1=a_3=1$, and $b_3=-1$.
\end{proof}

More generally, Proposition~\ref{prop_main result} and Theorem~\ref{prop_first order} also allow us to obtain the generalized stability of Tur\'an expressions of the sequence $(A_n^{(r)}(z))_{n\geq0}$ of $r$-colored Eulerian polynomials, introduced by Steingr\'{\i}msson~\cite{Steingrimsson-1994} for any positive integer $r\geq 1$  and defined by 
\begin{align} \label{eq-r-colored-EulerPol}
A_{n+1}^{(r)}(z)
=\bigl((rn+r-1)z+1\bigr)A_n^{(r)}(z)
+rz(1-z)\left(A_n^{(r)}(z)\right)',
\end{align}
with $A_0^{(r)}(z)=1$. We would like to point out that $\widetilde{A}_n(z)=zA_n^{(1)}(z)$ for any $n\ge1$,
which follows immediately from the corresponding combinatorial
definitions; see Stanley~\cite[Section~1.4, Eq.~(1.36)]{Stanley-2012}
and Steingr\'{\i}msson~\cite[Definition~5]{Steingrimsson-1994}. Alternatively, this relation can be verified by induction using \eqref{eq:Eulerian_recurrence} and \eqref{eq-r-colored-EulerPol}. The combinatorial definition given by
Steingr\'{\i}msson~\cite[Definition~5]{Steingrimsson-1994} also reveals that for every $n\geq0$ all coefficients of $A_n^{(r)}(z)$ are nonnegative.  
Moreover, Steingr\'{\i}msson~\cite[Theorem~19]{Steingrimsson-1994} proved that $A_n^{(r)}(z)$   has only real zeros. Since its coefficients are
nonnegative and $A_n^{(r)}(0)=1$, all these zeros are negative. Hence it is $\mathcal{H}$-stable for any $n\geq 0$. For further results on the $r$-colored Eulerian polynomials, see \cite{Savage-Visontai-2015,Liu-Yan-2025}. We have the following result, which is stronger than the $z$-log-convexity of $(A_n^{(r)}(z))_{n\geq0}$ implicitly stated by Zhu~\cite[Theorem 1.6]{Zhu-2018} and even stronger than its strong $z$-log-convexity established by Liu and Zhu \cite[Propositions 3.1 and 3.4]{Liu-Zhu-2015}. 

\begin{cor}
\label{cor_colored_Eulerian}
Let $r$ be a positive integer. For any pair of partitions
$\lambda$ and $\mu$ with $\lambda\unrhd\mu$, the Tur\'an expression
$\mathcal{T}_{\lambda,\mu}(A^{(r)};z)$ is a generalized stable polynomial.
\end{cor}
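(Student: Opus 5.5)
The plan is to apply Theorem~\ref{prop_first order} directly to the sequence $E_n(z)=A_n^{(r)}(z)$. First I shift the index in \eqref{eq-r-colored-EulerPol} so that it has the form \eqref{eq-fn-rec}. Writing $n\to n-1$ gives
\begin{align*}
A_n^{(r)}(z)=\bigl(1+(rn-1)z\bigr)A_{n-1}^{(r)}(z)+z(r-rz)\bigl(A_{n-1}^{(r)}(z)\bigr)'.
\end{align*}
This matches \eqref{eq-fn-rec} with the parameters
\begin{align*}
a_1=0,\quad a_2=1,\quad b_1=r,\quad b_2=-1,\quad a_3=r,\quad b_3=-r.
\end{align*}
I then check \eqref{cond:parameter}: $a_1=0\ge0$, $a_1+a_3=r\ge0$, $a_1+2a_3=2r\ge0$, and $b_1-|b_3|=0\ge0$. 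The coefficients are nonnegative by Steingr\'{\i}msson's combinatorial definition, as already recalled, and $A_0^{(r)}(z)=1$.

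Next comes the degree condition $\deg A_n^{(r)}(z)=n$. I would prove it by induction from the recurrence by comparing leading coefficients. If $A_{n-1}^{(r)}$ has degree $n-1$ with leading coefficient $\ell>0$, then the coefficient of $z^n$ in $A_n^{(r)}$ is
\begin{align*}
(rn-1)\ell-r(n-1)\ell=(r-1)\ell .
\end{align*}
This is positive only when $r\ge2$. For $r=1$ the degree is $n-1$ for $n\ge1$, since the classical $A_n(z)$ has degree $n-1$. So the degree hypothesis in \eqref{cond:parameter} fails for $r=1$, and I need a workaround there. For $r=1$ I would use $\widetilde{A}_n(z)=zA_n^{(1)}(z)$ for $n\ge1$. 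Every generalized Tur\'an expression of $(A_n^{(1)})$ then equals $z^{-s}$ times a generalized Tur\'an expression of $(\widetilde A_n)$, up to the index-$0$ terms. To avoid bookkeeping of zero parts, it is cleaner to apply Proposition~\ref{prop_main result} directly, noting that $\mathcal{Q}_n$ only involves indices $n-1,n,n+1$. Then $\mathcal{Q}_n(A^{(1)};z)=\mathcal{Q}_n(\widetilde A;z)$ for $n\ge2$. The case $n=1$ can be checked by hand: $A_0=A_1=1$ and $A_2=1+z$, so $\mathcal{Q}_1=1+z$, which has modulus greater than $1$ on $\mathcal{H}$. Alternatively, I observe that the proof of Theorem~\ref{prop_first order} never used the degree condition; it used only the recurrence, the parameter inequalities, $\mathcal{H}$-stability and interlacing. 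Hence it applies verbatim for every $r\ge1$. I would state this remark and take the route that is cleanest.

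The remaining hypothesis is the interlacing $A_n^{(r)}(z)\preceq A_{n+1}^{(r)}(z)$, which I expect to be the main obstacle, since real-rootedness alone, recalled from \cite[Theorem~19]{Steingrimsson-1994}, is not enough. I would derive it from the recurrence by the standard argument. Suppose $A_n^{(r)}$ has simple negative zeros $\alpha_1>\dots>\alpha_n$. At each $\alpha_j$, the recurrence gives
\begin{align*}
A_{n+1}^{(r)}(\alpha_j)=r\alpha_j(1-\alpha_j)\bigl(A_n^{(r)}\bigr)'(\alpha_j).
\end{align*}
Since $r\alpha_j(1-\alpha_j)<0$ and $(A_n^{(r)})'$ alternates in sign at consecutive zeros, the values $A_{n+1}^{(r)}(\alpha_j)$ alternate in sign. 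Combined with $A_{n+1}^{(r)}(0)=1>0$ and the sign of the leading coefficient as $z\to-\infty$, this places a zero of $A_{n+1}^{(r)}$ in each interval $(\alpha_{j+1},\alpha_j)$, in $(\alpha_1,0)$, and in $(-\infty,\alpha_n)$. Counting these against the degree gives strict interlacing, and simplicity of the zeros propagates by induction. Alternatively, I could cite known interlacing results for colored Eulerian polynomials, e.g.\ \cite{Savage-Visontai-2015}. With all hypotheses verified, Theorem~\ref{prop_first order} (or its proof, for $r=1$) yields the corollary.
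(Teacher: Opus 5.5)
Your proposal is correct and follows the paper's proof. You use the same parameters $a_1=0$, $a_2=1$, $a_3=b_1=r$, $b_2=-1$, $b_3=-r$ in Theorem~\ref{prop_first order} for $r\ge2$, and for $r=1$ you apply Proposition~\ref{prop_main result} directly, checking $\mathcal{Q}_1(A^{(1)};z)=1+z$ by hand. There are two minor differences: you prove the interlacing by the sign-alternation argument instead of citing Zhu's Proposition~1.7, and for $r=1$ you get the $n\ge2$ modulus condition from $\mathcal{Q}_n(A^{(1)};z)=\mathcal{Q}_n(\widetilde A;z)$ rather than rerunning the argument on $(A_n^{(1)})$.

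Drop your alternative remark that the proof of Theorem~\ref{prop_first order} applies verbatim for $r=1$. That proof invokes Lemma~\ref{lem_strict_interlace_reduction}, which needs $\deg E_n=\deg E_{n-1}+1$, and this fails at $n=1$ since $A_0^{(1)}=A_1^{(1)}=1$. So the separate check of $\mathcal{Q}_1$ is genuinely needed, and your primary route is the one to keep.
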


\begin{proof}
Let us first consider the case of $r\geq2$. By \eqref{eq-r-colored-EulerPol} and $A_0^{(r)}(z)=1$, it is easy to check that $\deg A_n^{(r)}(z)=n$ for any $n\geq0$. Moreover, by \cite[Proposition~1.7]{Zhu-2018}, we have $A_{n-1}^{(r)}(z)\preceq A_{n}^{(r)}(z)$ for any $n\ge 1$. Taking 
\begin{align*}
a_1=0,\qquad a_2=1,\qquad a_3=r,\qquad
b_1=r,\qquad b_2=-1,\qquad b_3=-r
\end{align*}
in Theorem~\ref{prop_first order} leads to the desired result.

We proceed to consider the case of $r=1$. In this case, by \eqref{eq-r-colored-EulerPol} we can check that $\deg A_n^{(1)}(z)=n-1$ for $n\geq1 $. By using \cite[Proposition~1.7]{Zhu-2018} again, we see that $A_{n}^{(1)}(z)\preceq A_{n+1}^{(1)}(z)$ for any $n\ge 1$. The arguments in the proof of Theorem~\ref{prop_first order} carry over directly to the sequence 
$(A_n^{(1)}(z))_{n\geq 1}$, from which we derive that for any $n\geq 2$ either
$\mathcal{Q}_n(A^{(1)};z)\equiv1$ or $|\mathcal{Q}_n(A^{(1)};z)|>1$ over $\mathcal{H}$. 
Moreover, it is routine to check that
\begin{align*}
\mathcal{Q}_1(A^{(1)};z)
=\frac{ A_2^{(1)}(z) A_0^{(1)}(z)}{ A_1^{(1)}(z)^2}
=1+z,
\end{align*}
and hence 
\begin{align*}
|\mathcal{Q}_1(A^{(1)};z)|
=\sqrt{(1+z)(1+\overline{z})}=\sqrt{1+2\operatorname{Re}z+|z|^2}>1
\end{align*}
for every $z\in\mathcal{H}$.
Together with the fact that for any $n\geq 0$ the polynomial $A_n^{(1)}(z)$ is $\mathcal{H}$-stable, 
we can derive the generalized stability of $\mathcal{T}_{\lambda,\mu}(A^{(1)};z)$ from 
Proposition~\ref{prop_main result}, as desired. This completes the proof. 
\end{proof}

\section{Narayana polynomial}\label{Narayana polynomial}

The goal of this section is to prove Theorem~\ref{Narayana polynomial thm} and Theorem~\ref{prop_second order}.
Note that the former is a special case of the latter. 

Following the same strategy used in the proof of Theorem \ref{prop_first order}, we again apply Proposition~\ref{prop_main result} to prove Theorem~\ref{prop_second order}.  
To this end, we need the $\mathcal{H}$-stability of the
generalized Narayana polynomial $N_n^{(r)}(z)$ $(r>1/2)$, which can be derived from
\cite[Eq.~(6.10)]{Amdeberhan-2013} and the fact that all zeros
of Gegenbauer polynomials lie in $(-1,1)$(see, for example,
Szegő~\cite{Szego-1975}). We further need to show that 
$\mathcal{Q}_n({N^{(r)};z})\equiv 1$ or $|\mathcal{Q}_n({N^{(r)};z})|>1$ over $\mathcal{H}$.
To prove this, we introduce a sequence $(P_n^{(r)}(z))_{n\geq 0}$ ($r>1/2$) of polynomials, which satisfy the following recurrence relation
\begin{equation}\label{eq:F_recurrence}
P_{n+1}^{(r)}(z)
=\theta_n^{(r)}P_n^{(r)}(z)-\eta_n^{(r)}zP_{n-1}^{(r)}(z),\qquad n\geq 1
\end{equation}
where
\begin{align*}
  P_0^{(r)}(z)=P_1^{(r)}(z)=1,\qquad \theta_n^{(r)}=\frac{2n+2r-1}{n+2r},
\qquad
\eta_n^{(r)}=\frac{n-1}{n+2r}.  
\end{align*}
By \eqref{eq:F_recurrence} one can see that 
\begin{align}\label{eq-pvalueat0}
P_0^{(r)}(0)=P_0^{(r)}(1)=1,\quad P_n^{(r)}(0)=\prod_{i=1}^{n-1} \theta_i^{(r)}>0,\quad P_n^{(r)}(1)=1
\end{align}
for any $n\geq 1$ and $r>1/2$.
By comparing \eqref{eq:F_recurrence} and \eqref{eq:generalized_Narayana_re} and setting 
$u(z)=(1-z)^2/(1+z)^2$, we find that for any $n\geq 1$
\begin{equation}\label{eq:N_F_relation}
N_n^{(r)}(z)
=z(1+z)^{n-1}P_n^{(r)}(u(z)),
\qquad z\neq-1.
\end{equation}
As will be shown in subsequent proofs, it is more convenient to work with $P_n^{(r)}(z)$ instead of $N_n^{(r)}(z)$.
To simplify the recurrence~\eqref{eq:F_recurrence}, we introduce the 
normalized polynomials
$\widehat P_n^{(r)}(z)
={P_n^{(r)}(z)}/{P_n^{(r)}(0)}$.
Then the  recurrence~\eqref{eq:F_recurrence} can be rewritten as
\begin{align}\label{eq:normalized_F_recurrence}
\widehat P_{n+1}^{(r)}(z)=\widehat P_n^{(r)}(z)-\rho_n^{(r)}z\widehat P_{n-1}^{(r)}(z), \qquad n\geq 1
\end{align}
where
\begin{align}\label{eq_rho}
\rho_n^{(r)}
=\frac{\eta_n^{(r)}P_{n-1}^{(r)}(0)}
{\theta_n^{(r)}P_n^{(r)}(0)}
=\frac{(n-1)(n+2r-1)}
{(2n+2r-1)(2n+2r-3)}>0,
\qquad n\geq2.
\end{align}
In particular, $\rho_1^{(r)}=0$.
And it can be checked that
\begin{align}\label{eq:rho_difference}
\rho_n^{(r)}-\rho_{n-1}^{(r)}=\frac{(2r-1)(2r+1)}{(2n+2r-5)(2n+2r-3)(2n+2r-1)}>0, \quad n\geq2.
\end{align}
The polynomials $P_n^{(r)}(z)$ and their normalized forms
$\widehat P_n^{(r)}(z)$ satisfy the following properties, which will
play a key role in establishing the required modulus condition for
$\mathcal Q_n(N^{(r)};z)$.
\begin{lem}\label{lem:F_properties}
For any $n\geq3$ and $|z|\leq1$ with $z\neq1$, we have
$|{P_{n}^{(r)}(z)}|>|{P_{n-1}^{(r)}(z)}|$. Moreover, for any $2\leq j\leq n$, we have
\begin{align*}
\left|
\frac{\widehat P_{j-1}^{(r)}(z)}
{\widehat P_n^{(r)}(z)}
\right|
<\frac{\widehat P_{j-1}^{(r)}(1)}
{\widehat P_n^{(r)}(1)}.
\end{align*}
\end{lem}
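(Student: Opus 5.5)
The plan is to work with the successive ratios of the normalized polynomials and to compare them, factor by factor, with their values at $z=1$. For $k\ge1$ set
\[
w_k(z)=\frac{\widehat P_k^{(r)}(z)}{\widehat P_{k-1}^{(r)}(z)} .
\]
Dividing \eqref{eq:normalized_F_recurrence} by $\widehat P_k^{(r)}(z)$ gives the continued-fraction recursion
\[
w_{k+1}(z)=1-\frac{\rho_k^{(r)}z}{w_k(z)},\qquad k\ge1 .
\]
At $z=1$ everything is explicit. By \eqref{eq-pvalueat0} we have $\widehat P_k^{(r)}(1)=1/P_k^{(r)}(0)$, so
\[
w_k(1)=\frac{P_{k-1}^{(r)}(0)}{P_k^{(r)}(0)}=\frac{1}{\theta_{k-1}^{(r)}}>0\quad (k\ge2),\qquad w_1(1)=1 .
\]
The initial values are also explicit: $w_1\equiv1$ because $P_0^{(r)}=P_1^{(r)}=1$, and $w_2\equiv1$ because $\rho_1^{(r)}=0$.

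\textbf{Key claim.} For every $k\ge1$ and every $|z|\le1$:
\begin{enumerate}[label=(\roman*)]
\item $\widehat P_k^{(r)}(z)\neq0$;
\item $|w_k(z)|\ge w_k(1)$;
\item for $k\ge3$ the inequality in (ii) is strict unless $z=1$.
\end{enumerate}

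\textbf{Proof of the claim by induction on $k$.} The cases $k=1,2$ are trivial. For the inductive step, the triangle inequality, $\rho_k^{(r)}\ge0$, $|z|\le1$ and the induction hypothesis give
\[
|w_{k+1}(z)|\ \ge\ 1-\frac{\rho_k^{(r)}|z|}{|w_k(z)|}\ \ge\ 1-\frac{\rho_k^{(r)}}{w_k(1)}\ =\ w_{k+1}(1)\ >0 .
\]
The last equality is the recursion evaluated at $z=1$. Positivity of $w_{k+1}(1)$ also shows $\widehat P_{k+1}^{(r)}(z)\neq0$, which propagates nonvanishing through the disk.

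For strictness when $k+1\ge3$ (so $k\ge2$ and $\rho_k^{(r)}>0$ by \eqref{eq_rho}), suppose equality holds throughout. Then $|z|=1$, $|w_k(z)|=w_k(1)$, and $\rho_k^{(r)}z/w_k(z)$ must be a nonnegative real number.
\begin{itemize}
\item If $k=2$, then $w_2\equiv1$, so $z$ is a positive real of modulus one, i.e.\ $z=1$.
\item If $k\ge3$, the induction hypothesis (iii) applied to $|w_k(z)|=w_k(1)$ already forces $z=1$.
\end{itemize}
This closes the induction. It is the step I expect to need the most care: tracking the equality cases of the triangle inequality, including the role of $\rho_1^{(r)}=0$, which is exactly why the strictness starts only at $k=3$.

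\textbf{First assertion.} Since $P_n^{(r)}=P_n^{(r)}(0)\,\widehat P_n^{(r)}$ and $P_n^{(r)}(0)/P_{n-1}^{(r)}(0)=\theta_{n-1}^{(r)}$, we get
\[
\left|\frac{P_n^{(r)}(z)}{P_{n-1}^{(r)}(z)}\right|=\theta_{n-1}^{(r)}\,|w_n(z)|\ \ge\ \theta_{n-1}^{(r)}\,w_n(1)=1 .
\]
For $n\ge3$ and $z\neq1$ the inequality is strict by (iii). Division is legitimate by (i).

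\textbf{Second assertion.} Telescope the ratio:
\[
\left|\frac{\widehat P_{j-1}^{(r)}(z)}{\widehat P_n^{(r)}(z)}\right|=\prod_{k=j}^{n}\frac{1}{|w_k(z)|}\ \le\ \prod_{k=j}^{n}\frac{1}{w_k(1)}=\frac{\widehat P_{j-1}^{(r)}(1)}{\widehat P_n^{(r)}(1)} .
\]
Every factor satisfies (ii). Since $2\le j\le n$ and $n\ge3$, the product always contains the factor $k=n\ge3$, which is strict by (iii) when $z\ne1$. Hence the strict inequality claimed in Lemma~\ref{lem:F_properties} holds.
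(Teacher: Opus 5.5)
Your proof is correct and is essentially the paper's argument in normalized form: since $\theta_{k-1}^{(r)}|w_k(z)|=|P_k^{(r)}(z)/P_{k-1}^{(r)}(z)|$ and $\theta_{k-1}^{(r)}w_k(1)=1$, your bound $|w_k(z)|\ge w_k(1)$ is exactly $|P_k^{(r)}(z)|\ge|P_{k-1}^{(r)}(z)|$. Likewise, your triangle-inequality step is the paper's estimate $|P_{n+1}^{(r)}|\ge\theta_n^{(r)}|P_n^{(r)}|-\eta_n^{(r)}|z|\,|P_{n-1}^{(r)}|$ divided through by $\theta_n^{(r)}|P_n^{(r)}|$, and the second assertion is the same telescoping product; the only differences are bookkeeping (you get strictness from the equality case of the triangle inequality rather than from a strict induction starting at $P_3^{(r)}$, and you state the non-vanishing of $\widehat P_k^{(r)}$ on the disk explicitly).
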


\begin{proof}
Let us first use induction on $n$ to prove the first statement. Based on \eqref{eq:F_recurrence}, one can obtain that $P_2^{(r)}(z)=1$ and
$P_3^{(r)}(z)=1+(1-z)/(2r+2)$. Clearly, we have 
$|P_3^{(r)}(z)|\geq\operatorname{Re}P_3^{(r)}(z)>1$ whenever $|z|\leq1$ with $z\neq1$, as asserted.
Suppose that the assertion holds for some $n\geq3$. By \eqref{eq-pvalueat0} it is clear that 
$|P_{n+1}^{(r)}(0)|>|P_n^{(r)}(0)|$. 
Now we may assume that $z\neq 0$. Using~\eqref{eq:F_recurrence} and $\eta_n^{(r)}>0, \theta_n^{(r)}>0$, we get
\begin{align*}
|P_{n+1}^{(r)}(z)|&=|\theta_n^{(r)}P_n^{(r)}(z)
-\eta_n^{(r)}z P_{n-1}^{(r)}(z)|\\
&\geq \theta_n^{(r)}|P_n^{(r)}(z)|
-\eta_n^{(r)}|z|\,|P_{n-1}^{(r)}(z)|\\
&>(\theta_n^{(r)}-\eta_n^{(r)})|P_n^{(r)}(z)|\\
&=|P_n^{(r)}(z)|,
\end{align*}
where the strict inequality follows from the induction hypothesis. 

We continue to prove the second statement. Note that for any $k\geq 3$ we have
\begin{align}\label{eq:normalized_ratio_bound}
\left|\frac{\widehat P_{k-1}^{(r)}(z)}{\widehat P_k^{(r)}(z)}\right|
=\frac{P_k^{(r)}(0)}{P_{k-1}^{(r)}(0)}\left|\frac{P_{k-1}^{(r)}(z)}{P_k^{(r)}(z)}\right|
<\frac{P_k^{(r)}(0)}{P_{k-1}^{(r)}(0)}
=\frac{\widehat P_{k-1}^{(r)}(1)}{\widehat P_k^{(r)}(1)},
\end{align}
where the inequality follows from the first statement we just proved. 
Since $\widehat P_1^{(r)}(z)=\widehat P_2^{(r)}(z)=1$,  for $2\leq j\leq n$ and $n\geq3,$ we have

\begin{align*}
\left|
\frac{\widehat P_{j-1}^{(r)}(z)}
{\widehat P_n^{(r)}(z)}
\right|
&=\prod_{k=j}^{n}
\left|
\frac{\widehat P_{k-1}^{(r)}(z)}
{\widehat P_k^{(r)}(z)}
\right|<\prod_{k=j}^{n}
\frac{\widehat P_{k-1}^{(r)}(1)}
{\widehat P_k^{(r)}(1)}
=
\frac{\widehat P_{j-1}^{(r)}(1)}
{\widehat P_n^{(r)}(1)},
\end{align*}
where the inequality is strict by
\eqref{eq:normalized_ratio_bound} with $k=n\geq3$.
 This completes the proof.
\end{proof}

We proceed to use $P_n^{(r)}(z)$ (or the normalized form $\widehat P_{n}^{(r)}(z)$) to study the modulus condition for 
$\mathcal{Q}_n(N^{(r)};z)$. 
Let $\mathcal{D}=\{z| |z|\leq 1 \mbox{ and }z\neq 1\}\subseteq \mathbb{C}$. 
By using \eqref{eq:N_F_relation} and taking $u=u(z)={(1-z)^2}/{(1+z)^2}$, for every 
$n\geq2$ one can show that 
\begin{align*} 
\mathcal{Q}_n(N^{(r)};z) 
=\frac{N_{n+1}^{(r)}(z)N_{n-1}^{(r)}(z)} 
        {\bigl(N_n^{(r)}(z)\bigr)^2} 
=\frac{P_{n+1}^{(r)}(u)P_{n-1}^{(r)}(u)} 
        {\bigl(P_n^{(r)}(u)\bigr)^2} 
 =\mathcal{Q}_n(P^{(r)};u).
\end{align*} 
Thus, for $n\geq 2$,  the modulus condition for 
$\mathcal{Q}_n(N^{(r)};z)$ over $\mathcal{H}$ is equivalent to the modulus condition for 
$\mathcal{Q}_n(P^{(r)};z)$ but over $\mathcal{D}$. 
To study the latter, we recall the Tur\'an expressions $\mathcal{T}_n(\widehat P^{(r)};z)=(\widehat{P}_{n}^{(r)}(z))^2 -\widehat{P}_{n+1}^{(r)}(z)\widehat{P}_{n-1}^{(r)}(z)$ and use \eqref{eq-pvalueat0} to rewrite $\mathcal{Q}_n({P^{(r)};z})$ as
\begin{align}\label{Q_n(P)}
\mathcal{Q}_n({P^{(r)};z})=\frac{\theta_n^{(r)}}{\theta_{n-1}^{(r)}}\left[1-\frac{\mathcal{T}_n(\widehat P^{(r)};z)}{\left(\widehat P_n^{(r)}(z)\right)^2}\right].
\end{align}
We need the following technical lemma for
$\mathcal T_n(\widehat P^{(r)};z)$.

\begin{lem}\label{lemma-Turan}
For every $n\geq2$, we have 
\begin{align*}
\mathcal T_n(\widehat P^{(r)};z)
={}&\sum_{j=2}^{n}
\left(\rho_j^{(r)}-\rho_{j-1}^{(r)}\right)
\left(\prod_{k=j}^{n-1}\rho_k^{(r)}\right)
z^{n-j+1}
\left(\widehat P_{j-1}^{(r)}(z)\right)^2.
\end{align*}
\end{lem}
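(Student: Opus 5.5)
We will prove the formula by induction on $n$, using only the three-term recurrence \eqref{eq:normalized_F_recurrence}. Write $\widehat P_n=\widehat P_n^{(r)}(z)$, $\rho_n=\rho_n^{(r)}$ and $T_n=\mathcal T_n(\widehat P^{(r)};z)=\widehat P_n^2-\widehat P_{n+1}\widehat P_{n-1}$. The key step is a one-step recursion for $T_n$.

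Substitute $\widehat P_{n+1}=\widehat P_n-\rho_n z\widehat P_{n-1}$ into $T_n$ and also write $\widehat P_n=\widehat P_{n-1}-\rho_{n-1}z\widehat P_{n-2}$. This gives
\begin{align*}
T_n&=\widehat P_n^2-\widehat P_n\widehat P_{n-1}+\rho_n z\widehat P_{n-1}^2
=\widehat P_n\bigl(\widehat P_n-\widehat P_{n-1}\bigr)+\rho_n z\widehat P_{n-1}^2\\
&=-\rho_{n-1}z\widehat P_n\widehat P_{n-2}+\rho_n z\widehat P_{n-1}^2 .
\end{align*}
Adding and subtracting $\rho_{n-1}z\widehat P_{n-1}^2$ then yields
\begin{align*}
T_n=\rho_{n-1}z\bigl(\widehat P_{n-1}^2-\widehat P_n\widehat P_{n-2}\bigr)+(\rho_n-\rho_{n-1})z\widehat P_{n-1}^2
=\rho_{n-1}zT_{n-1}+(\rho_n-\rho_{n-1})z\widehat P_{n-1}^2,
\end{align*}
valid for $n\ge3$. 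This requires the recurrence at indices $n$ and $n-1\geq 2$, so \eqref{eq:normalized_F_recurrence} applies.

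For the base case $n=2$, we use $\widehat P_1=\widehat P_2=1$ and $\widehat P_3=1-\rho_2 z$. These give $T_2=1-(1-\rho_2z)=\rho_2 z$. The claimed sum has the single term $j=2$ with an empty product, namely $(\rho_2-\rho_1)z\widehat P_1^2=\rho_2 z$, since $\rho_1=0$.

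For the inductive step, we insert the hypothesis for $T_{n-1}$ into the recursion. Multiplying the $j$-th term $(\rho_j-\rho_{j-1})\bigl(\prod_{k=j}^{n-2}\rho_k\bigr)z^{n-j}\widehat P_{j-1}^2$ by $\rho_{n-1}z$ turns it into the $j$-th term of the formula for $n$, for $2\le j\le n-1$. The extra summand $(\rho_n-\rho_{n-1})z\widehat P_{n-1}^2$ is exactly the $j=n$ term, with an empty product and $z^{1}$.

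There is no real obstacle beyond carefully checking the index ranges: the recursion needs $n\ge3$, and the base case needs the convention $\rho_1=0$ together with the empty product convention.
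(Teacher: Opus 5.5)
Your proof is correct and follows the paper's argument: you apply the normalized recurrence twice, split $\rho_n=\rho_{n-1}+(\rho_n-\rho_{n-1})$ to get $T_n=\rho_{n-1}zT_{n-1}+(\rho_n-\rho_{n-1})z\widehat P_{n-1}^2$, and iterate. The only difference is the base case. The paper starts from $T_1=0$, since the recursion already holds at $n=2$ because the normalized recurrence is valid for all $n\ge1$. You instead verify $T_2=\rho_2 z$ directly and use the recursion for $n\ge3$, which is equally valid.
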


\begin{proof}
Applying \eqref{eq:normalized_F_recurrence} twice 
leads to 
\begin{align}
\mathcal T_n(\widehat P^{(r)};z)
={}&\left(\widehat P_n^{(r)}(z)\right)^2
-\widehat P_{n-1}^{(r)}(z)
 \left(
 \widehat P_n^{(r)}(z)
 -\rho_n^{(r)}z\widehat P_{n-1}^{(r)}(z)
 \right)
\notag\\
={}&\widehat P_n^{(r)}(z)
\left(
\widehat P_n^{(r)}(z)
-\widehat P_{n-1}^{(r)}(z)
\right)
+\rho_n^{(r)}z
\left(\widehat P_{n-1}^{(r)}(z)\right)^2
\notag\\
={}&\rho_n^{(r)}z
\left(\widehat P_{n-1}^{(r)}(z)\right)^2-\rho_{n-1}^{(r)}z
\widehat P_n^{(r)}(z)\widehat P_{n-2}^{(r)}(z).\notag
\end{align}
Further splitting $\rho_n^{(r)}$ into $\rho_{n-1}^{(r)}$ and $\rho_n^{(r)}-\rho_{n-1}^{(r)}$ gives 
\begin{align}
\mathcal T_n(\widehat P^{(r)};z)={}&\rho_{n-1}^{(r)}z
\left[
\left(\widehat P_{n-1}^{(r)}(z)\right)^2
-\widehat P_n^{(r)}(z)\widehat P_{n-2}^{(r)}(z)
\right]
\notag\\
&+\left(\rho_n^{(r)}-\rho_{n-1}^{(r)}\right)z
\left(\widehat P_{n-1}^{(r)}(z)\right)^2
\notag\\
={}&\rho_{n-1}^{(r)}z
\mathcal T_{n-1}(\widehat P^{(r)};z)
+\left(\rho_n^{(r)}-\rho_{n-1}^{(r)}\right)z
\left(\widehat P_{n-1}^{(r)}(z)\right)^2.
\label{eq:T_recurrence}
\end{align}
Since
$\mathcal T_1(\widehat P^{(r)};z)
=(\widehat P_1^{(r)}(z))^2
-\widehat P_0^{(r)}(z)\widehat P_2^{(r)}(z)
=0,$
iterating \eqref{eq:T_recurrence} yields
\begin{align*}
\mathcal T_n(\widehat P^{(r)};z)
={}&
\left(\prod_{k=1}^{n-1}\rho_k^{(r)}\right)
z^{n-1}\mathcal T_1(\widehat P^{(r)};z)\\
&+\sum_{j=2}^{n}
\left(\rho_j^{(r)}-\rho_{j-1}^{(r)}\right)
\left(\prod_{k=j}^{n-1}\rho_k^{(r)}\right)
z^{n-j+1}
\left(\widehat P_{j-1}^{(r)}(z)\right)^2,
\end{align*}
as desired. This completes the proof. 
\end{proof}

The following lemma provides the  modulus condition for the rational functions $\mathcal Q_n(P^{(r)};z)$ over
$\mathcal D$.

\begin{lem}\label{lem:L_estimate}
Let $r>1/2$ and let $\mathcal{D}$ be defined as above. Then, for $n\geq2$ and $z\in \mathcal{D}$, we have
\begin{align*}
|\mathcal{Q}_n({P^{(r)};z})|\geq \operatorname{Re}\mathcal{Q}_n({P^{(r)};z})>1.
\end{align*}
\end{lem}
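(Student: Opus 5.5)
The plan is to work directly from the representation \eqref{Q_n(P)} and compare every quantity with its value at $z=1$, where everything is explicit. By \eqref{eq-pvalueat0} we have $P_k^{(r)}(1)=1$ for all $k$, hence $\mathcal{Q}_n(P^{(r)};1)=1$. Put
\begin{align*}
c_n:=\frac{\mathcal{T}_n(\widehat P^{(r)};1)}{\bigl(\widehat P_n^{(r)}(1)\bigr)^2}.
\end{align*}
Evaluating \eqref{Q_n(P)} at $z=1$ gives $1-c_n=\theta_{n-1}^{(r)}/\theta_n^{(r)}$. Therefore the claim $\operatorname{Re}\mathcal{Q}_n(P^{(r)};z)>1$ is equivalent to
\begin{align*}
\operatorname{Re}\frac{\mathcal{T}_n(\widehat P^{(r)};z)}{\bigl(\widehat P_n^{(r)}(z)\bigr)^2}<c_n ,\qquad z\in\mathcal{D}.
\end{align*}
This reduction uses $\theta_n^{(r)}/\theta_{n-1}^{(r)}>0$; in fact a direct computation shows this ratio exceeds $1$, since the difference of cross products equals $2r+1>0$, though only positivity is needed. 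The inequality $|\mathcal{Q}_n|\ge\operatorname{Re}\mathcal{Q}_n$ is trivial. We also note that $\widehat P_n^{(r)}(z)\neq0$ on $\mathcal D$: by Lemma~\ref{lem:F_properties}, $|P_n^{(r)}(z)|\ge |P_2^{(r)}(z)|=1$ there, so the quotient is well defined.

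Next, bound the modulus using Lemma~\ref{lemma-Turan}. The coefficients
\begin{align*}
w_j:=\bigl(\rho_j^{(r)}-\rho_{j-1}^{(r)}\bigr)\prod_{k=j}^{n-1}\rho_k^{(r)},\qquad 2\le j\le n,
\end{align*}
are strictly positive, by \eqref{eq:rho_difference} and $\rho_k^{(r)}>0$ for $k\ge2$ (the product never contains $\rho_1^{(r)}=0$). For $|z|\le1$ the triangle inequality then gives
\begin{align*}
\left|\frac{\mathcal{T}_n(\widehat P^{(r)};z)}{\bigl(\widehat P_n^{(r)}(z)\bigr)^2}\right|
\le\sum_{j=2}^n w_j\,|z|^{n-j+1}\left|\frac{\widehat P_{j-1}^{(r)}(z)}{\widehat P_n^{(r)}(z)}\right|^2 .
\end{align*}
For $n\ge3$, the second statement of Lemma~\ref{lem:F_properties} bounds each ratio strictly by $\widehat P_{j-1}^{(r)}(1)/\widehat P_n^{(r)}(1)$, and $|z|^{n-j+1}\le1$. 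Hence the sum is strictly less than
\begin{align*}
\sum_{j=2}^n w_j\bigl(\widehat P_{j-1}^{(r)}(1)/\widehat P_n^{(r)}(1)\bigr)^2=c_n,
\end{align*}
again by Lemma~\ref{lemma-Turan} evaluated at $z=1$. Since the real part is at most the modulus, the desired strict inequality follows.

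The case $n=2$ is not covered by Lemma~\ref{lem:F_properties}, so it is handled by hand. Here $\widehat P_1^{(r)}=\widehat P_2^{(r)}=1$ and $\mathcal{T}_2(\widehat P^{(r)};z)=\rho_2^{(r)}z$, so $c_2=\rho_2^{(r)}$. The required inequality becomes $\rho_2^{(r)}\operatorname{Re}z<\rho_2^{(r)}$, which holds on $\mathcal D$ because $\operatorname{Re}z=1$ together with $|z|\le1$ forces $z=1$.

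The only delicate point is strictness. For $n\ge3$ it is supplied entirely by Lemma~\ref{lem:F_properties}, which is already strict at every $z\in\mathcal D$, including $|z|=1$. For $n=2$ it comes from the elementary observation above. No further obstacle is expected.
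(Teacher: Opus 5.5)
Your proof is correct and follows essentially the same route as the paper: you use \eqref{Q_n(P)} and $\mathcal{Q}_n(P^{(r)};1)=1$ to reduce the claim to comparing $\mathcal{T}_n(\widehat P^{(r)};z)/(\widehat P_n^{(r)}(z))^2$ with its value at $z=1$, expand via Lemma~\ref{lemma-Turan}, and bound each term strictly with Lemma~\ref{lem:F_properties}. The differences are cosmetic: you apply the triangle inequality to the whole sum where the paper takes real parts term by term, and you treat $n=2$ through the same reduction ($c_2=\rho_2^{(r)}$) instead of the explicit formula $\mathcal{Q}_2(P^{(r)};z)=1+(1-z)/(2r+2)$.
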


\begin{proof}  We first consider the case $n=2$. 
Since $P_1^{(r)}(z)=P_2^{(r)}(z)=1$, we have $\mathcal{Q}_2({P^{(r)};z})=\theta_2^{(r)}-\eta_2^{(r)}z=1+(1-z)/(2r+2)$. 
Thus, $\operatorname{Re}(\mathcal{Q}_2({P^{(r)};z})-1)=(1-\operatorname{Re}z)/(2r+2)>0$ whenever $z\in\mathcal{D}$.

We proceed to prove the assertion for $n\geq3$. It suffices to prove that $\operatorname{Re}\left(\mathcal{Q}_n({P^{(r)};z})-1\right)>0$ for any $z\in\mathcal{D}$. It follows from \eqref{Q_n(P)} and
$\mathcal Q_n(P^{(r)};1)=1$ that
\begin{align*}
\mathcal{Q}_n(P^{(r)};z)-1
&=\mathcal{Q}_n(P^{(r)};z)-
\mathcal{Q}_n(P^{(r)};1)\nonumber \\ 
&=\frac{\theta_n^{(r)}}{\theta_{n-1}^{(r)}}
\left[\frac{\mathcal{T}_n(\widehat P^{(r)};1)}{\left(\widehat P_n^{(r)}(1)\right)^2}-
\frac{\mathcal{T}_n(\widehat P^{(r)};z)}{\left(\widehat P_n^{(r)}(z)\right)^2}
\right].
\end{align*}
One can easily check that ${\theta_n^{(r)}}/{\theta_{n-1}^{(r)}}>0$ for any $n\geq 3$. We only need to prove that 
if $z\in\mathcal{D}$ then
\begin{align}\label{Re>0}
\operatorname{Re}\left[\frac{\mathcal{T}_n(\widehat P^{(r)};1)}{\left(\widehat P_n^{(r)}(1)\right)^2}-
\frac{\mathcal{T}_n(\widehat P^{(r)};z)}{\left(\widehat P_n^{(r)}(z)\right)^2}
\right]>0.
\end{align}
By using Lemma \ref{lemma-Turan}, we obtain 
\begin{align*}
\frac{\mathcal{T}_n(\widehat P^{(r)};1)}{\left(\widehat P_n^{(r)}(1)\right)^2}-\frac{\mathcal{T}_n(\widehat P^{(r)};z)}{\left(\widehat P_n^{(r)}(z)\right)^2}&=\sum_{j=2}^{n}\left(\rho_j^{(r)}-\rho_{j-1}^{(r)}\right)\left(\prod_{k=j}^{n-1}\rho_k^{(r)}\right)\notag\\
&\quad\times\left[\left(\frac{\widehat P_{j-1}^{(r)}(1)}{\widehat P_n^{(r)}(1)}\right)^2-z^{n-j+1}\left(\frac{\widehat P_{j-1}^{(r)}(z)}{\widehat P_n^{(r)}(z)}\right)^2\right].
\end{align*}
Note that, for any $z\in \mathcal{D}$ (hence $|z|\leq 1$) and any $2\leq j\leq n$, we have
\begin{align*}
\left|z^{n-j+1}\left(\frac{\widehat P_{j-1}^{(r)}(z)}{\widehat P_n^{(r)}(z)}\right)^2\right|\leq\left|\left(\frac{\widehat P_{j-1}^{(r)}(z)}{\widehat P_n^{(r)}(z)}\right)^2\right|<\left(\frac{\widehat P_{j-1}^{(r)}(1)}{\widehat P_n^{(r)}(1)}\right)^2,
\end{align*}
where the second inequality follows from Lemma \ref{lem:F_properties}. 
Hence, for any $z\in\mathcal{D}$ there holds
\begin{align*}
 \operatorname{Re}\left[
\left( \frac{\widehat{P}_{j-1}^{(r)}(1)}{\widehat{P}_n^{(r)}(1)} \right)^2-z^{n-j+1}
\left( \frac{\widehat{P}_{j-1}^{(r)}(z)}{\widehat{P}_n^{(r)}(z)} \right)^2
\right]>0.
\end{align*}
Combining  this inequality with \eqref{eq_rho} and \eqref{eq:rho_difference}, we deduce \eqref{Re>0}.
Thus the real part of $\mathcal{Q}_n({P^{(r)};z})$ is greater than $1$ over $\mathcal{D}$ for $n\geq3$. Together with the case $n=2$, this completes the proof.
\end{proof}

Now we are in a position to apply Proposition~\ref{prop_main result} to prove Theorem~\ref{prop_second order}.

\begin{proof}[Proof of Theorem~\ref{prop_second order}]
We apply Proposition~\ref{prop_main result} with
$\Omega=\mathcal{H}:=\{z\in\mathbb{C}:\operatorname{Re}z\geq0,\ z\neq0\}$.
As we mentioned at the beginning of this section, all $N_n^{(r)}(z)$ are $\mathcal{H}$-stable. 
This fact can also be obtained from Lemma \ref{lem:F_properties} due to its equivalence with the $\mathcal{D}$-stability of $P_{n}^{(r)}(z)$. 

It remains to verify the modulus condition for 
$\mathcal{Q}_n(N^{(r)};z)$ over $\mathcal{H}$. By Lemma \ref{lem:L_estimate}, we see that $|\mathcal{Q}_n(P^{(r)};u)|=|\mathcal{Q}_n(N^{(r)};z)|>1$ over $\mathcal{H}$ for any $n\geq 2$. Thus, we only need to check that $|\mathcal{Q}_1(N^{(r)};z)|>1$. Since $N_0^{(r)}(z)=1$, $N_1^{(r)}(z)=z$, and $N_2^{(r)}(z)=z(1+z)$, we get
\begin{align*}
|\mathcal{Q}_1(N^{(r)};z)|^2=\left|\frac{1+z}{z}\right|^2
=1+\frac{1+2\operatorname{Re}z}{|z|^2}>1
\end{align*}
for every $z\in\mathcal{H}$, as desired. The proof is complete. 
\end{proof}

As a consequence of Theorem \ref{prop_second order}, we have the following result at once, which implies Theorem~\ref{Narayana polynomial thm}. 

\begin{cor}
\label{cor_Narayana}
For any pair of partitions $\lambda$ and $\mu$ 
with $\lambda\unrhd\mu$, the Tur\'an expression
$\mathcal{T}_{\lambda,\mu}(N;z)$ is a generalized stable polynomial.
\end{cor}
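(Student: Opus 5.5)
This corollary is the special case $r=1$ of Theorem~\ref{prop_second order}. The plan is simply to check that the Narayana polynomials fit that theorem's hypotheses and then invoke it.

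First, I set $r=1$ in \eqref{eq:generalized_Narayana_re}. Then $n+2r=n+2$ and $2n+2r-1=2n+1$, so the recurrence becomes
\[
(n+2)N_{n+1}^{(1)}(z)=(2n+1)(1+z)N_n^{(1)}(z)-(n-1)(1-z)^2N_{n-1}^{(1)}(z).
\]
The initial values are $N_0^{(1)}(z)=1$ and $N_1^{(1)}(z)=z$. This recurrence and these initial conditions coincide exactly with \eqref{eq:Narayana_recurrence}. By induction on $n$, $N_n^{(1)}(z)=N_n(z)$ for all $n\geq0$, which is the identity $N_n(z)=N_n^{(1)}(z)$ already noted in the introduction.

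Second, the value $r=1$ satisfies the standing requirement $r>1/2$ of Theorem~\ref{prop_second order}. Applying that theorem to any pair of partitions with $\lambda\unrhd\mu$ shows that $\mathcal{T}_{\lambda,\mu}(N^{(1)};z)=\mathcal{T}_{\lambda,\mu}(N;z)$ is generalized stable, which is the claim.

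To recover Theorem~\ref{Narayana polynomial thm}, I would specialize to $\lambda=(n+1,m-1)$ and $\mu=(n,m)$ with $1\le m\le n$. Here $\lambda\unrhd\mu$, since $n+1\ge n$ and both partitions have total size $n+m$. The corollary then gives that $\mathcal{T}_{n,m}(N;z)=\mathcal{T}_{\lambda,\mu}(N;z)$ is generalized stable. Generalized stability implies weak Hurwitz stability, because the only zeros not covered by the definition are at $0$, and $0$ has real part $0$.

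There is no real obstacle here. The only care needed is to confirm that the recurrence identification holds for every $n\ge1$ and that the zero polynomial, arising when $\lambda=\mu$, is regarded as stable by convention.
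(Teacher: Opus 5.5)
Your proposal is correct and follows the paper's route: the paper obtains this corollary immediately from Theorem~\ref{prop_second order} with $r=1$, using $N_n(z)=N_n^{(1)}(z)$. Your further specialization to $\lambda=(n+1,m-1)$, $\mu=(n,m)$ is how the corollary yields Theorem~\ref{Narayana polynomial thm}.
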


Combining Proposition~\ref{prop_main result}, Lemma \ref{lem:F_properties}, Lemma \ref{lem:L_estimate} and  the fact that $\mathcal{Q}_1(P^{(r)};z)\equiv 1$ over $\mathcal{D}$, we immediately
obtain the following result, which is parallel to Theorem~\ref{prop_second order} and whose proof is omitted here. 

\begin{thm}\label{Pn polynomial thm}
Let $(P_n^{(r)}(z))_{n\geq0}$ be the polynomial sequence defined by \eqref{eq:F_recurrence} for some $r>1/2$.
If $\lambda,\mu$ are two partitions satisfying $\lambda\unrhd\mu$, then $\mathcal{T}_{\lambda,\mu}(P^{(r)};z)$ is $\mathcal{D}$-stable.
\end{thm}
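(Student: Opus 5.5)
The plan is to apply Proposition~\ref{prop_main result} with $\Omega=\mathcal{D}=\{z:|z|\le 1,\ z\neq 1\}$ and $f_n=P_n^{(r)}$. The set $\mathcal{D}$ contains infinitely many points, as the standing assumption requires. Two hypotheses must be checked: each $P_n^{(r)}(z)$ is a nonzero $\mathcal{D}$-stable polynomial, and for each $n\ge1$ either $\mathcal{Q}_n(P^{(r)};z)\equiv1$ or $|\mathcal{Q}_n(P^{(r)};z)|>1$ on $\mathcal{D}$. Once both hold, Proposition~\ref{prop_main result} gives the $\mathcal{D}$-stability of $\mathcal{T}_{\lambda,\mu}(P^{(r)};z)$ whenever $\lambda\unrhd\mu$. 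The case $\lambda=\mu$ is covered by the convention that the zero polynomial is stable.

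For stability, note that $P_0^{(r)}=P_1^{(r)}=P_2^{(r)}=1$ never vanish. For $n\ge3$, the first statement of Lemma~\ref{lem:F_properties} gives $|P_n^{(r)}(z)|>|P_{n-1}^{(r)}(z)|$ on $\mathcal{D}$. Chaining these inequalities down to $n=2$ yields $|P_n^{(r)}(z)|>|P_2^{(r)}(z)|=1>0$ for every $z\in\mathcal{D}$. Hence every $P_n^{(r)}$ is nonzero and $\mathcal{D}$-stable.

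For the modulus condition, the case $n=1$ is immediate: $\mathcal{Q}_1(P^{(r)};z)=P_2^{(r)}P_0^{(r)}/(P_1^{(r)})^2\equiv1$. For $n\ge2$, Lemma~\ref{lem:L_estimate} gives $|\mathcal{Q}_n(P^{(r)};z)|\ge\operatorname{Re}\mathcal{Q}_n(P^{(r)};z)>1$ on $\mathcal{D}$. Thus every $n\ge1$ falls into one of the two alternatives, and the hypotheses of Proposition~\ref{prop_main result} hold.

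No step here is a genuine obstacle, since the analytic content already resides in Lemmas~\ref{lem:F_properties} and~\ref{lem:L_estimate}. The one point needing care is that Proposition~\ref{prop_main result} uses the alternative "$\mathcal{Q}_j\equiv1$" at $j=1$. In that situation $|\mathcal{Q}_1|=1$ identically, and the argument of Proposition~\ref{prop_main result} handles it correctly: if the modulus product equals $1$ at some $z_0$, then every $\mathcal{Q}_j$ with $d_j>0$ must be identically $1$. This forces the difference to vanish identically, and the zero polynomial is stable by convention.
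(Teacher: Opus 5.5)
Your proposal is correct and follows exactly the route the paper indicates for this omitted proof. You apply Proposition~\ref{prop_main result} with $\Omega=\mathcal{D}$, obtain the $\mathcal{D}$-stability of every $P_n^{(r)}$ from Lemma~\ref{lem:F_properties} by chaining down to $|P_2^{(r)}|=1$, use $\mathcal{Q}_1(P^{(r)};z)\equiv1$, and invoke Lemma~\ref{lem:L_estimate} for $n\ge2$.
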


\noindent \textbf{Acknowledgements.} 
This work is supported by the Fundamental Research Funds for the Central Universities.
We are grateful to Lily Liu, Xue Yan, Arthur Yang and Bao-Xuan Zhu
for their helpful discussions and  suggestions.

\end{document}